\DeclareSymbolFontAlphabet{\mathbbm}{bbold}
\DeclareSymbolFontAlphabet{\mathbb}{AMSb}
\providecommand{\noopsort}[1]{}
\DeclareFontFamily{U}{mathb}{\hyphenchar\font45}
\DeclareFontShape{U}{mathb}{m}{n}{
      <5> <6> <7> <8> <9> <10> gen * mathb
      <10.95> mathb10 <12> <14.4> <17.28> <20.74> <24.88> mathb12
      }{}
\DeclareSymbolFont{mathb}{U}{mathb}{m}{n}
\DeclareMathSymbol{\boxslash}{2}{mathb}{"6D}
\DeclareRobustCommand\longtwoheadrightarrow
\DeclareRobustCommand\longtwoheadleftarrow
\DeclareRobustCommand\longrightarrowtail
\newcommand{\we}{}
\DeclareRobustCommand{\we}{%
  \mathrel{\vphantom{\rightarrow}\mathpalette\circle@arrow\relax}%
}
\newcommand{\circle@arrow}[2]{%
  \m@th
  \ooalign{%
    \hidewidth$#1\circ\mkern1mu$\hidewidth\cr
    $#1\longrightarrow$\cr}%
}
\newcommand{\tfib}{}
\DeclareRobustCommand{\tfib}{%
  \mathrel{\vphantom{\twoheadrightarrow}\mathpalette\circle@twoheadarrow\relax}%
}
\newcommand{\circle@twoheadarrow}[2]{%
  \m@th
  \ooalign{%
    \hidewidth$#1\circ\mkern1mu$\hidewidth\cr
    $#1\longtwoheadrightarrow$\cr}%
}
\newcommand{\tfibleft}{}
\DeclareRobustCommand{\tfibleft}{%
  \mathrel{\vphantom{\twoheadleftarrow}\mathpalette\circle@twoheadleftarrow\relax}%
}
\newcommand{\circle@twoheadleftarrow}[2]{%
  \m@th
  \ooalign{%
    \hidewidth$#1\circ$\hidewidth\cr
    $#1\longtwoheadleftarrow\mkern1mu$\cr}%
}
\newcommand{\tcofib}{}
\DeclareRobustCommand{\tcofib}{%
  \mathrel{\vphantom{\rightarrowtail}\mathpalette\circle@rightarrowtail\relax}%
}
\newcommand{\circle@rightarrowtail}[2]{%
  \m@th
  \ooalign{%
    \hidewidth$#1\circ\mkern1mu$\hidewidth\cr
    $#1\longrightarrowtail$\cr}%
}
\renewcommand{\tocsection}[3]{%
  \indentlabel{\@ifnotempty{#2}{\bfseries\ignorespaces#1 #2\quad}}\bfseries#3}
\renewcommand{\tocsubsection}[3]{%
  \indentlabel{\@ifnotempty{#2}{\ignorespaces#1 #2\quad}}#3}
\newcommand\@dotsep{4.5}
\def\@tocline#1#2#3#4#5#6#7{\relax
  \ifnum #1>\c@tocdepth 
  \else
    \par \addpenalty\@secpenalty\addvspace{#2}%
    \begingroup \hyphenpenalty\@M
    \@ifempty{#4}{%
      \@tempdima\csname r@tocindent\number#1\endcsname\relax
    }{%
      \@tempdima#4\relax
    }%
    \parindent\z@ \leftskip#3\relax \advance\leftskip\@tempdima\relax
    \rightskip\@pnumwidth plus1em \parfillskip-\@pnumwidth
    #5\leavevmode\hskip-\@tempdima{#6}\nobreak
    \leaders\hbox{$\m@th\mkern \@dotsep mu\hbox{.}\mkern \@dotsep mu$}\hfill
    \nobreak
    \hbox to\@pnumwidth{\@tocpagenum{\ifnum#1=1\fi#7}}\par
    \nobreak
    \endgroup
  \fi}
\renewcommand\csname r@tocindent0\endcsname{0pt}
\def\l@subsection{\@tocline{2}{0pt}{2.5pc}{5pc}{}}
\theoremstyle{plain}
\newtheorem{theorem}{Theorem}[section]
\newtheorem{lemma}[theorem]{Lemma}
\newtheorem{proposition}[theorem]{Proposition}
\newtheorem{corollary}[theorem]{Corollary}
\newtheorem*{claim*}{Claim}
\theoremstyle{definition}
\newtheorem{definition}[theorem]{Definition}
\newtheorem{remark}[theorem]{Remark}
\newtheorem{notation}[theorem]{Notation}
\newcommand{\op}{\mathrm{op}} 
\newcommand{\colim}
{\operatornamewithlimits{colim}}
\DeclareMathOperator{\Lan}{Lan}
\newcommand{\llp}[1]{{}^{\scaleobj{0.7}{\boxslash}}#1} 
\newcommand{\rlp}[1]{#1^{\mkern1mu \scaleobj{0.7}{\boxslash}}} 
\newcommand{\An}{\mathrm{AnExt}} 
\DeclareMathOperator{\cHo}{Ho_c} 
\DeclareMathOperator{\Mod}{Mod} 
\DeclareMathOperator{\R}{\mathbf{R}} 
\DeclareMathOperator{\M}{\mathscr{M}} 
\newcommand{\down}{{\downarrow}} 
\newcommand{\up}{{\uparrow}} 
\newcommand{\N}{\mathbb{N}} 
\renewcommand{\o}{\overline}
\newcommand{\w}{\widehat}
\newcommand{\one}{\mathbf{1}} 
\newcommand{\two}{\mathbf{2}} 
\newcommand{\acc}{\frown} 
\newcommand{\E}{\mathscr{E}} 
\newcommand{\ML}{\mathrm{ML}} 
\newcommand{\emptyp}{\varnothing} 
\DeclareMathOperator{\depth}{\mathrm{depth}}
\newcommand{\cof}{\rightarrowtail} 
\newcommand{\fib}{\twoheadrightarrow} 
\newcommand{\into}{\hookrightarrow} 
\newcommand{\id}{\mathrm{id}} 
\newcommand{\Cof}{\mathscr{C}} 
\newcommand{\We}{\mathscr{W}} 
\newcommand{\Fib}{\mathscr{F}} 
\newcommand{\Rex}{\Rrightarrow^{\exists}} 
\renewcommand{\epsilon}{\varepsilon}
\renewcommand{\theta}{\vartheta}
\renewcommand{\phi}{\varphi}
\DeclareMathOperator{\Set}{\mathbf{Set}} 
\DeclareMathOperator{\C}{\mathscr{D}} 
\newcommand{\K}{\mathbb{K}} 
\newcommand{\T}{\mathbb{T}} 
\renewcommand{\P}{\mathbb{P}} 
\newcommand{\St}{\mathbb{S}} 
\newcommand{\om}{\bbomega} 
\title{A Model Category for Modal Logic}
\author{Luca Reggio}
\address{University College London, UK}
\email{l.reggio@ucl.ac.uk}
\thanks{Research supported by the EPSRC grant EP/V040944/1.}
\begin{document}

\maketitle

\vspace{-1em}
\begin{abstract} 
We define Quillen model structures on a family of presheaf toposes arising from tree unravellings of Kripke models, leading to a homotopy theory for modal logic. Modal preservation theorems and the Hennessy--Milner property are revisited from a homotopical perspective.
\end{abstract}

\tableofcontents

\vspace*{-2em}
\section{Introduction}
Many basic questions in (finite) model theory, and more generally in logic, can be phrased as \emph{model existence} problems: is there a model satisfying a collection of (logically) specified properties? For instance, consider the problem of constructing an $\omega$-saturated elementary extension of a given structure~$M$. A possible solution goes as follows: start from $M$ and, using the Compactness Theorem for first-order logic, deduce the existence of an elementary extension $M_1$ of $M$ that is ``saturated relative to $M$''; repeat this process, now starting from $M_1$, to obtain an elementary chain
\[
M \subseteq M_1 \subseteq M_2 \subseteq \cdots
\]
whose union is an $\omega$-saturated elementary extension of $M$.

The approach based on the Compactness Theorem has some shortcomings: it is not applicable to finite model theory (since the Compactness Theorem fails over finite structures) and, more generally, it does not allow to properly handle logical resources, because we lose control over the combinatorial complexity of the objects involved. An alternative solution to the model existence problem is via the \emph{model construction} method, which consists in explicitly constructing a model satisfying the desired properties. In the example above, we could construct an $\omega$-saturated elementary extension as an appropriate ultrapower of $M$.

In general, an object obtained by means of a model construction argument is overdetermined, in the sense that it is specified up to isomorphism and not up to equivalence with respect to the prescribed logical properties. 
An analogous situation occurs in topology, when we are interested in studying a space up to homotopy equivalence, rather than up to homeomorphism.

The aim of this note is to make this analogy more precise in the case of modal logic, using the language of abstract homotopy theory (also referred to as \emph{categorical homotopy} or \emph{homotopical algebra}). To this end, after recalling the necessary preliminaries in \S \ref{s:preliminaries}, we define in \S\S \ref{s:tree-unravel}--\ref{s:model-structures} Quillen model structures on categories of presheaves closely related to (partial) tree unravellings of Kripke models. Arguably, this provides a flexible setting in which to manipulate objects up to logical equivalence and perform constructions that are invariant under such equivalences. To illustrate this idea, in \S \ref{s:Los-Tarski} we give a short proof of the {\L}o\'s-Tarski preservation theorem for modal logic and its relativisation to finite models~\cite{HNvB1998, Rosen1997}, and in \S \ref{s:HM} we offer a homotopical view on the Hennessy--Milner property.

\subsubsection*{To the (abstract) homotopy theorist}
The model categories we shall consider arise from Cisinski model structures on presheaf toposes. Thus, they are combinatorial model categories (i.e., locally presentable and cofibrantly generated), the cofibrations are the monomorphisms, and all objects are~cofibrant.

From a logical perspective, a central question is whether, in these model categories, two objects can be connected by a span of trivial fibrations. If the answer is positive, we say that they are \emph{Morita equivalent}. In view of Kenneth Brown's Factorization lemma, combined with Whitehead theorem, Morita equivalence between fibrant objects reduces to homotopy equivalence.

\subsubsection*{To the (modal) logician}
All results below can be extended to modal languages with any finite number of modal operators, but for simplicity we shall restrict ourselves to unimodal languages. 
The presheaves we are interested in are defined over categories of traces, i.e.\ Kripke models in which the reflexive transitive closure of the accessibility relation is a finite linear order.

Whereas p-morphisms (i.e.\ functional bisimulations) are the natural notion of morphism in modal logic, in the categorical setting it is convenient to consider arbitrary homomorphisms (i.e.\ functional simulations). 
A particularly useful class of homomorphisms is that of \emph{pathwise embeddings}, the homomorphisms that also reflect the validity of atomic formulas.

\section{Preliminaries}\label{s:preliminaries}

\subsection{Modal logic}
We recall the basic concepts of (propositional) modal logic; further notions will be introduced when necessary. For a comprehensive treatment of the subject, see e.g.~\cite{blackburn2002modal}.

The language of modal logic extends the language of classical propositional logic, based on propositional variables and Boolean connectives $\top$, $\bot$, $\wedge$, $\vee$ and $\neg$, by adding modalities $\Box$ and $\Diamond$. For each propositional variable $p$ in the language, we consider a corresponding unary relation symbol~$\mathscr{P}$. Intuitively, the interpretation of $\mathscr{P}$ in a structure describes the subset of elements in which $p$ is true (this viewpoint is based on the \emph{standard translation} of modal logic into first-order logic, see e.g.\ \cite[\S 2.4]{blackburn2002modal}). The structures in which modal formulas are interpreted are the Kripke models.

\begin{definition}[Kripke model]
A \emph{Kripke model} consists of 
\begin{enumerate}[label=(\roman*)]
\item a set $A$ equipped with a binary relation $\acc_{A}$, the \emph{accessibility relation}, 
\item a subset $\mathscr{P}^{A}\subseteq A$ for each unary relation $\mathscr{P}$ in the language, and 
\item a distinguished element $a\in A$. 
\end{enumerate}
A \emph{homomorphism of Kripke~models} 
\[
f\colon (A,\acc_{A},a) \to (B,\acc_{B},b)
\] 
is a function $f\colon A\to B$ such that the following conditions are satisfied:
\begin{enumerate}[label=(\roman*)]
\item For all $x,y\in A$, $x\acc_{A} y$ implies $f(x)\acc_{B}f(y)$.
\item For all unary relations $\mathscr{P}$ and all $x\in A$, $x\in \mathscr{P}^{A}$ implies $f(x)\in \mathscr{P}^{B}$.
\item $f(a)=b$.
\end{enumerate}
The category of Kripke models and their homomorphisms is denoted by $\K$.
\end{definition}

\begin{remark}
We can identify Kripke models as structures (in the usual model-theoretic sense) for a vocabulary containing a constant symbol, a binary relation, and a set of unary relations. A homomorphism of Kripke models then coincides with the usual notion of homomorphism between structures.
\end{remark}

We systematically drop the subscript from the accessibility relation $\acc_{A}$ and simply denote it by $\acc$. We also write $(A,a)$ instead of $(A,\acc,a)$, or even just $A$ if no confusion arises.

The interpretation of a modal formula $\phi$ in a Kripke model $(A,a)$ is defined by structural induction on $\phi$:

\begin{itemize}
\item If $\phi$ is a propositional variable $p$, then $(A,a)\models p$ just when $a\in \mathscr{P}^{A}$.
\item The semantics of Boolean connectives is the usual one, for example $(A,a)\models \phi_{1} \vee \phi_{2}$ just when $(A,a)\models \phi_{1}$ or $(A,a)\models \phi_{2}$.
\item If $\phi$ is of the form $\Box \psi$, then $(A,a)\models \phi$ just when $(A,x)\models \psi$ for \emph{all} $x\in A$ such that $a\acc x$.
\item If $\phi$ is of the form $\Diamond \psi$, then $(A,a)\models \phi$ just when $(A,x)\models \psi$ for~\emph{some} $x\in A$ such that $a\acc x$.
\end{itemize}
Note that the formula $\Box\phi$ is logically equivalent to $\neg \Diamond\neg\phi$.

Next, we introduce modal depth, a complexity measure of modal formulas. Intuitively, it measures the maximum nesting of modalities in a formula.
\begin{definition}[Modal depth]
The \emph{(modal) depth} of a modal formula~$\phi$, written $\depth(\phi)$, is defined inductively as follows:
\begin{itemize}
\item If $\phi$ is either $\top,\bot$ or a propositional variable $p$, $\depth(\phi)=0$.
\item $\depth(\neg\psi)=\depth(\psi)$.
\item $\depth(\phi_{1}\vee \phi_{2})=\max(\depth(\phi_{1}),\depth(\phi_{2}))=\depth(\phi_{1}\wedge \phi_{2})$.
\item $\depth(\Box\psi) = \depth(\psi)+1 = \depth(\Diamond\psi)$.
\end{itemize}
\end{definition}

\subsection{Model categories}

In this section we recall the notions of \emph{Quillen model category} and \emph{Quillen adjunction}. In fact, we will only deal with a special class of model categories, namely \emph{Cisinski model categories}, but the latter concept is not needed to understand the content of \S\ref{s:tree-unravel}--\ref{s:HM} and its definition is therefore deferred to Appendix~\ref{app:model-str-proofs}. For a more thorough introduction to Quillen and Cisinski model categories, see e.g.~\cite{Cisinski2019}.

A morphism $f$ in a category $\C$ has the \emph{left lifting property} with respect to a morphism $g$, and $g$ has the \emph{right lifting property} with respect to $f$, provided that every commutative square as displayed below admits a diagonal filler,
\[\begin{tikzcd}
\cdot \arrow{d}[swap]{f} \arrow{r} & \cdot \arrow{d}{g} \\
\cdot \arrow{r} \arrow[dashed]{ur}[description]{d} & \cdot
\end{tikzcd}\]
that is an arrow $d$ making the two triangles commute.

Given a class $\mathscr{M}$ of morphisms in $\C$, we denote by $\llp{\mathscr{M}}$ (respectively, by $\rlp{\mathscr{M}}$) the class of arrows having the left (respectively, right) lifting property with respect to all morphisms in $\mathscr{M}$.

\begin{definition}[Weak factorisation system]
A pair $(\mathscr{L},\mathscr{R})$ of classes of morphisms in a category $\C$ is a \emph{weak factorisation system} if it satisfies the following two conditions:
\begin{enumerate}[label=(\roman*)]
\item\label{weak-decomposition} Every arrow of $\C$ can be factored as an arrow of $\mathscr{L}$ followed by an arrow of $\mathscr{R}$.
\item\label{weak-determin} $\mathscr{L} = \llp{\mathscr{R}}$ and $\mathscr{R} = \rlp{\mathscr{L}}$.
\end{enumerate}
\end{definition}
Note that, in view of item~\ref{weak-determin} above, any of the two classes forming a weak factorisation system determines the other.

\begin{definition}[Model category]
A \emph{(Quillen) model structure} on a locally small and finitely bicomplete category $\C$ is a triple $(\We,\Cof,\Fib)$ of classes of morphisms in $\C$ satisfying the following two properties:
\begin{enumerate}[label=(\roman*)]
\item The class $\We$ has the two-out-of-three property. That is, given composable morphisms $u\colon X\to Y$ and $v\colon Y\to Z$, if any two of $u, v$, and $vu$ are in $\We$, so is the third.
\item $(\Cof\cap \We,\Fib)$ and $(\Cof,\Fib\cap\We)$ are weak factorisation systems.
\end{enumerate} 
A \emph{(Quillen) model category} is a category $\C$ equipped with a model structure $(\We,\Cof,\Fib)$.
\end{definition}
If $(\We,\Cof,\Fib)$ is a model structure, we refer to the arrows in each of the three classes as the \emph{weak equivalences}, \emph{cofibrations} and \emph{fibrations}, respectively. The arrows in $\Cof\cap \We$ are called \emph{trivial cofibrations}, and those in $\Fib\cap\We$ \emph{trivial fibrations}. Cofibrations and fibrations are denoted, respectively, by $\cof$ and $\fib$, and weak equivalences by $\we$. E.g., $\tfib$ denotes a trivial fibration.

\begin{definition}[Fibrant object]
An object $X$ of a model category $\C$ is \emph{fibrant} if the unique morphism $!\colon X\to\one$ to the terminal object is a fibration.
\end{definition}

\begin{remark}\label{rem:overdetermined}
The data defining a model structure is overdetermined: any two of the classes $\We$, $\Cof$ and $\Fib$ determine the third. For example, an arrow is a weak equivalence just when it can be factored as a trivial cofibration followed by a trivial fibration.

In fact, by a result of Joyal, a model structure on a category is completely determined by its cofibrations together with its class of fibrant objects; see \cite[Proposition~E.1.10]{Joyal2008} or \cite[Theorem 15.3.1]{Riehl2014}.
\end{remark}

\begin{definition}[Quillen adjunction]
Let $\C$ and $\C'$ be Quillen model categories. A \emph{Quillen adjunction} is an adjoint pair of functors
\[\begin{tikzcd}[column sep=1.0em]
\C \arrow[bend left=35]{rr}[description]{G} & \text{\tiny{$\top$}} & \C' \arrow[bend left=35]{ll}[description]{F}
\end{tikzcd}\]
such that the left adjoint $F$ preserves cofibrations and the right adjoint $G$ preserves fibrations. A \emph{right Quillen functor} is a functor $G$ admitting a left adjoint $F$ such that the pair $(F,G)$ is a Quillen adjunction.
\end{definition}

Given an adjoint pair of functors $(F,G)$ between model categories, $F$ preserves cofibrations just when $G$ preserves trivial fibrations, and $F$ preserves trivial cofibrations just when $G$ preserves fibrations. In particular, $(F,G)$ is a Quillen adjunction if, and only if, $F$ preserves trivial cofibrations and $G$ preserves trivial fibrations; cf.\ e.g.\ \cite[Remark~2.3.8]{Cisinski2019}.

\section{Tree unravellings and the presheaf categories \texorpdfstring{$\w{\P_k}$}{Pk Hat}}
\label{s:tree-unravel}

\subsection{Unravellings}
We start by recalling the technique of \emph{unravelling}, of central importance in modal logic, which originates from~\cite{DL1959,Sahlqvist1975}.
Henceforth, $k$ denotes an ordinal that is either finite or equal to $\omega$. 

For any Kripke model $(A,a)$, we consider its \emph{tree unravelling to depth~$k$} starting from $a$, denoted by $R_k (A,a)$. When $k=\omega$, this is usually referred to as the \emph{full unravelling} of $A$. $R_k (A,a)$ is the Kripke model whose elements are non-empty sequences $[a_0, \ldots, a_j]$ of length at most $k$ of elements of $A$ such that:
\begin{enumerate}[label=(\roman*)]
\item $a_0=a$, and
\item $a_{\ell} \acc a_{\ell+1}$ for all $0\leq \ell<j$.
\end{enumerate}
The accessibility relation on $R_k (A,a)$ is defined as follows: for any two sequences $s,t\in R_k (A,a)$, $s\acc t$ holds just when $t$ is obtained by making one further step along $\acc$. That is, $s\acc t$ if, and only if, there exist $a_0, \ldots, a_n$ such that $s = [a_0,\ldots, a_{n-1}]$, $t = [a_0,\ldots, a_{n-1},a_n]$ and $a_{n-1}\acc a_n$. 
For each unary relation $\mathscr{P}$, its interpretation in $R_k (A,a)$ consists of the sequences whose last element is in $\mathscr{P}^{A}$. Finally, the distinguished element of $R_k (A,a)$ is the one-element sequence $[a]$.

Note that (the reflexive transitive closure of) the accessibility relation on $R_k(A,a)$ is a tree order of height at most $k$ whose least element, also called the \emph{root}, is the distinguished element $[a]$. That is, $R_k(A,a)$ is a so-called synchronization tree:

\begin{definition}[Synchronization tree]
A Kripke model $(C,c)$ is a \emph{synchronization tree} if, for all $x\in C$, there is a unique finite sequence $c_0\acc \cdots \acc c_n$ of elements of $C$ with $c_0=c$ and $c_n = x$. The full subcategory of $\K$ consisting of the synchronization trees is denoted by $\T$.
\end{definition}

Any synchronization tree $(C,c)$ carries a ``definable'' tree order, in the sense that the reflexive transitive closure of its accessibility relation is a tree order. For every $x\in C$, we write $\up x$ and $\down x$, respectively, for the up-set and down-set of $x$ in this~order.

Let $\T_k$ denote the full subcategory of $\K$ defined by the synchronization trees of height at most $k$ (in particular, $\T_{\omega}=\T$). The assignment $(A,a) \mapsto R_k(A,a)$ extends to a functor $R_k \colon \K \to \T_k$ sending a homomorphism of Kripke models $f\colon (A,a)\to (B,b)$ to
\[
R_k f\colon R_k(A,a)\to R_k(B,b), \ \ [a_0, \ldots, a_j] \mapsto [f(a_0), \ldots, f(a_j)].
\]

\begin{lemma}\label{l:coreflections-K-Tk}
The functor $R_k$ is right adjoint to the inclusion $\T_k \into \K$:
\[\begin{tikzcd}[column sep = 4em]
\T_k \arrow[hookrightarrow]{r}[yshift=3pt]{\top} & \K \arrow[bend right=50]{l}[description]{R_k}
\end{tikzcd}\]
In other words, $\T_k$ is a coreflective subcategory of $\K$. 
\end{lemma}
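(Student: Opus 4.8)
The plan is to exhibit, for every Kripke model $(A,a)$ and every synchronization tree $(C,c)$ of height at most $k$, a natural bijection
\[
\K\bigl((C,c),(A,a)\bigr) \;\cong\; \T_k\bigl((C,c), R_k(A,a)\bigr),
\]
where on the left $(C,c)$ is regarded as an object of $\K$ via the inclusion. The unit of the purported adjunction should be the obvious map $\eta_A \colon R_k(A,a) \to (A,a)$ sending a sequence $[a_0,\dots,a_j]$ to its last element $a_j$; one first checks this is a homomorphism of Kripke models (it preserves $\acc$ by the definition of the accessibility relation on $R_k(A,a)$, preserves each $\mathscr{P}$ because membership in $\mathscr{P}^{R_k(A,a)}$ is defined via the last coordinate, and sends the root $[a]$ to $a$). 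Naturality of $\eta$ in $A$ is immediate from the formula for $R_k f$.

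First I would establish the bijection directly. Given a homomorphism $g\colon (C,c)\to(A,a)$ in $\K$, I define $\widehat{g}\colon (C,c)\to R_k(A,a)$ by sending $x\in C$ to $[\,g(c_0),\dots,g(c_n)\,]$, where $c_0\acc\cdots\acc c_n$ is the unique path from the root $c$ to $x$ guaranteed by the synchronization-tree structure; since $C$ has height at most $k$ this sequence has length at most $k$, so it really is an element of $R_k(A,a)$, and $g$ being a homomorphism ensures consecutive terms are $\acc$-related in $A$, so the conditions defining $R_k(A,a)$ hold. One then verifies $\widehat{g}$ is a homomorphism of Kripke models: it preserves the root because the path to $c$ is just $[c]$; it preserves $\acc$ because if $x\acc y$ in $C$ then the path to $y$ extends the path to $x$ by one step, which is exactly one application of $\acc$ in $R_k(A,a)$; and it preserves each $\mathscr{P}$ because $x\in\mathscr{P}^C$ forces $g(x)\in\mathscr{P}^A$, and $g(x)$ is the last coordinate of $\widehat{g}(x)$. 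Conversely, composition with $\eta_A$ sends any $h\colon (C,c)\to R_k(A,a)$ to a homomorphism $\eta_A\circ h\colon (C,c)\to (A,a)$.

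Next I would check the two assignments are mutually inverse. The composite $g\mapsto \widehat{g}\mapsto \eta_A\circ\widehat{g}$ returns $x\mapsto g(c_n)=g(x)$, so it is the identity. For the other direction, given $h\colon (C,c)\to R_k(A,a)$, I must show $\widehat{\eta_A\circ h}=h$: this is where the synchronization-tree hypothesis does the essential work. Writing $h(x)=[b_0,\dots,b_m]$, one argues by induction along the unique path from $c$ to $x$ that $m=n$ and $b_i=g(c_i)$ for the path $c_0\acc\cdots\acc c_n$; the base case uses $h(c)=[a]$ (forced since $h$ preserves the root), and the inductive step uses that $h$ preserves $\acc$ together with the fact that in $R_k(A,a)$ the only way to reach a node from its predecessor along $\acc$ is by appending one coordinate. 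This pins down $h(x)$ as $[g(c_0),\dots,g(c_n)]=\widehat{\eta_A\circ h}(x)$. Finally, naturality of the bijection in $(C,c)$ follows by unwinding definitions: precomposing $g$ with a tree homomorphism $(C',c')\to(C,c)$ transports paths to paths compatibly with the formula for $\widehat{(-)}$.

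The only genuinely delicate point is the verification that $\widehat{\eta_A\circ h}=h$, i.e.\ that a homomorphism into a tree unravelling is completely determined by its composite with $\eta_A$; everything else is routine. This rigidity is precisely the content of the ``unique path'' clause in the definition of synchronization tree, and the bound on the height of objects of $\T_k$ is exactly what guarantees the construction $\widehat{(-)}$ lands in $R_k(A,a)$ rather than in the full unravelling. Once the hom-set bijection and its naturality in both variables are in place, it is the standard fact that an adjunction is equivalent to such a natural family of bijections, and the final sentence (that $\T_k\hookrightarrow\K$ is fully faithful, hence exhibits $\T_k$ as coreflective) follows because the inclusion is full by construction of $\T_k$ as a full subcategory.
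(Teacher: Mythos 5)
Your proof is correct. The paper states this lemma without proof, and your argument is the expected one: exhibit the natural bijection $\K\bigl((C,c),(A,a)\bigr)\cong\T_k\bigl((C,c),R_k(A,a)\bigr)$ via the last-element map and the unique-path lift, with the ``unique path'' clause of the synchronization-tree definition doing the work in showing that a homomorphism into $R_k(A,a)$ is determined by its composite with the last-element map, and the height bound on objects of $\T_k$ ensuring the lift lands in $R_k(A,a)$. One small terminological slip: for the adjunction $\iota\dashv R_k$ (inclusion on the left), the map $\varepsilon_A\colon R_k(A,a)\to(A,a)$ is the \emph{counit}, not the unit; the unit is the isomorphism $(C,c)\cong R_k(C,c)$ for $(C,c)\in\T_k$, whose invertibility reflects the fact that the inclusion is fully faithful.
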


\begin{definition}[Pathwise embedding]
A homomorphism of Kripke models $f\colon A\to B$ is a \emph{pathwise embedding} if it reflects the unary relations, i.e.\ for all $x\in A$ and all unary relation symbols $\mathscr{P}$ in the language, $f(x)\in \mathscr{P}^B$ implies $x\in \mathscr{P}^A$. The wide subcategory of $\T_k$ defined by the pathwise embeddings is denoted by $\St_k$.
\end{definition}

The notion of pathwise embedding admits a logical reading. Recall that a modal formula is \emph{existential} if it does not contain the modality $\Box$ and all subformulas in the scope of a negation symbol are propositional variables. With this terminology, the next lemma follows e.g.\ from the axiomatic result in \cite[Proposition~6.18]{AR2023} (a concrete proof will appear in the forthcoming~\cite{ALR23}):
\begin{lemma}\label{l:pathwise-emb-logic}
The following statements are equivalent for all Kripke models $A,B\in \K$ and all finite ordinals $k$:
\begin{enumerate}[label=(\arabic*)]
\item There exists a pathwise embedding $R_k A\to R_k B$.
\item For all existential modal formulas $\phi$ of depth at most~$k$, $A\models \phi$ entails $B\models \phi$.
\end{enumerate}
\end{lemma}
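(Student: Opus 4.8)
The plan is to prove the equivalence by relating both conditions to a back-and-forth style analysis of the synchronization trees $R_k A$ and $R_k B$, exploiting that existential modal formulas of bounded depth are exactly those preserved along homomorphisms that reflect atomic data --- which is precisely what pathwise embeddings do.

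\textbf{From (1) to (2).} First I would show that a pathwise embedding $g\colon R_k A \to R_k B$ forces $A \models \phi \Rightarrow B \models \phi$ for every existential $\phi$ of depth at most $k$. The key point is that, since $k$ is finite, the truth of a modal formula of depth at most $k$ in $(A,a)$ depends only on the unravelling to depth $k$: one checks by induction on $\phi$ that $(A,a)\models\phi$ iff $(R_k A,[a])\models\phi$, the $\Diamond$/$\Box$ steps decreasing the depth budget by one and matching the one-step structure of the unravelling. So it suffices to prove that existential formulas of \emph{any} depth are preserved by an arbitrary pathwise embedding $g$ of synchronization trees. This is a routine structural induction: atomic formulas are preserved because $g$ is a homomorphism; the clause $\Diamond\psi$ uses that $g$ sends successors to successors (so a witness upstairs maps to a witness downstairs); Boolean connectives built from existential formulas are handled using that negation is only applied to propositional variables, and here reflection of the unary relations by $g$ gives preservation of negated atoms (if $g(x)\in\mathscr{P}^B$ would contradict $x\notin\mathscr{P}^A$). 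Conjunction and disjunction are then immediate.

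\textbf{From (2) to (1).} This is the substantive direction and I expect it to be the main obstacle. The strategy is to build the pathwise embedding $R_k A \to R_k B$ directly, level by level, using condition (2) as an oracle. For a node $s = [a_0,\dots,a_j] \in R_k A$, one wants to assign an image node $t = [b_0,\dots,b_j] \in R_k B$, respecting roots, the successor relation, and reflecting the unary predicates. The idea is to use characteristic formulas: for each node $s$ at depth $j\le k$, the modal type of $s$ up to depth $k-j$ can be captured (over a finite language --- but here there may be infinitely many propositional variables, so some care about which variables actually occur is needed, or one restricts attention to the finitely many variables appearing in the relevant formulas) by an existential formula, and condition (2) lets one transfer the existence of a successor realising a given type from $A$ to $B$. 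Concretely, I would proceed by recursion on depth: having placed $s\mapsto t$ with the property that every existential formula of depth $\le k-j$ true at $s$ is true at $t$, and for each successor $s' = s \acc [a_0,\dots,a_j,a_{j+1}]$ of $s$, I use the fact that ``there exists a successor satisfying (the existential part of) the type of $s'$'' is itself an existential formula of depth $\le k-j$ true at $s$, hence true at $t$, to find a matching successor $t'$ of $t$; the reflection-of-predicates condition is forced because negated atoms are among the existential formulas being transferred. Assembling these choices gives the desired homomorphism, and pathwise-ness is built in.

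\textbf{Main obstacle and cross-checks.} The delicate points are: (a) making the ``type'' argument rigorous when the language of propositional variables is infinite --- one should note that each instance of (2) involves only finitely many formulas hence finitely many variables, so the types are taken relative to a finite sublanguage, and this is enough since a pathwise embedding only needs to reflect each $\mathscr{P}$ individually; (b) ensuring the successor-choices are coherent, i.e.\ that one really gets a function $R_k A \to R_k B$ and not just a relation --- this is handled by making the choices recursively along the tree and invoking choice if necessary, or by being content with the pure existence statement; and (c) checking that depth at most $k$ is exactly the right bound for everything to close up, which is why the hypothesis restricts to finite $k$ (for $k=\omega$ the characteristic-formula argument breaks down, matching the fact that the lemma is stated only for finite ordinals). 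Alternatively, one could shortcut the whole argument by citing \cite[Proposition~6.18]{AR2023}: that axiomatic result identifies, for a suitable comonadic / game-theoretic setup, the morphisms witnessing one-sided preference of existential-positive-type formulas with exactly the pathwise embeddings between unravellings, and instantiating it in the present modal setting yields the equivalence directly; I would present the self-contained back-and-forth proof as the primary argument and mention the citation as the conceptual reason it works.
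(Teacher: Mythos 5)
The paper does not actually prove this lemma: it defers to the axiomatic result \cite[Proposition~6.18]{AR2023} and to the forthcoming \cite{ALR23}. Your self-contained back-and-forth argument is therefore a genuinely different, more elementary route, and most of it is sound. Direction (1)$\Rightarrow$(2) is correct: truth of depth-$\leq k$ formulas is invariant under unravelling to depth $k$ (modulo the paper's off-by-one convention on lengths), and existential formulas are preserved along pathwise embeddings by the structural induction you describe, with reflection of the unary relations handling negated atoms. Direction (2)$\Rightarrow$(1) via existential Hintikka-type formulas and a recursion along the tree is the standard concrete proof and is correct \emph{provided the modal vocabulary is finite}, which is the standing assumption in \cite{AR2023,AS2021} (and is harmless for the paper's application in \S 5, where one restricts to the variables occurring in a fixed formula).

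The genuine gap is in your point (a). Your claim that working with finite sublanguages suffices ``since a pathwise embedding only needs to reflect each $\mathscr{P}$ individually'' is a non sequitur: reflection at a node $x$ imposes \emph{all} the constraints $g(x)\notin\mathscr{P}^{B}$ (for every $\mathscr{P}$ with $x\notin\mathscr{P}^{A}$) on the \emph{same} witness, whereas condition (2) only supplies witnesses for finitely many constraints at a time. Indeed, with infinitely many propositional variables $p_{1},p_{2},\dots$ the implication (2)$\Rightarrow$(1) is false: let $A$ consist of $a\acc a'$ with all $\mathscr{P}_{i}^{A}=\emptyset$, and let $B$ consist of a root $b$ with successors $b_{1},b_{2},\dots$ where $\mathscr{P}_{i}^{B}=\{b_{i}\}$. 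Every existential formula mentions only finitely many variables, say among $p_{1},\dots,p_{m}$, and mapping $a'\mapsto b_{n}$ for $n>m$ shows that $B$ satisfies every existential formula satisfied by $A$; yet no pathwise embedding $R_{k}A\to R_{k}B$ exists for $k\geq 2$, since the image of $[a,a']$ would lie in some $\mathscr{P}_{n}$ while $[a,a']$ does not. So your argument needs the finiteness of the vocabulary as an explicit hypothesis (under which the characteristic formulas exist and your recursion closes up); the hand-wavy reduction to finite sublanguages cannot replace it. With that hypothesis stated, your proof is complete; your fallback of citing \cite[Proposition~6.18]{AR2023} is exactly what the paper does.
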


Recall that an injective homomorphism between Kripke models is an \emph{embedding} (also called a \emph{strong} injective homomorphism) if it reflects both the unary relations and the accessibility relation.
Every embedding is a pathwise embedding, but the converse need not hold. 
An example is provided by the retraction $A + A \to A$, for any $A\in \K$, that is the identity on each summand. However, a pathwise embedding between synchronization trees is an embedding provided it is injective:
\begin{lemma}\label{l:emb-vs-inj}
The following statements are equivalent for any arrow $f$ in~$\St_k$:
\begin{enumerate}[label=(\arabic*)]
\item\label{i:S_k-monic} $f$ is a monomorphism.
\item\label{i:S_k-inj} $f$ is injective.
\item\label{i:S_k-emb} $f$ is an embedding.
\end{enumerate}
\end{lemma}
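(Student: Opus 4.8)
The plan is to prove the cycle of implications $(3)\Rightarrow(1)\Rightarrow(2)\Rightarrow(3)$, since $(1)\Rightarrow(2)$ is trivial (monomorphisms in any category with a terminal object — or rather, here we just recall that monos between presheaf-like structures are injective on underlying sets) and $(3)\Rightarrow(1)$ is nearly as easy, because an embedding is in particular injective, and an injective function between sets is a monomorphism in $\Set$; reflection/preservation of structure then upgrades this to a monomorphism in $\St_k$. So the real content is $(2)\Rightarrow(3)$: an injective pathwise embedding between synchronization trees of height at most $k$ is an embedding, i.e.\ it additionally \emph{reflects} the accessibility relation.

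First I would fix notation: let $f\colon (C,c)\to (D,d)$ be an injective arrow of $\St_k$, so $f$ preserves $\acc$, preserves the unary relations, reflects the unary relations (being a pathwise embedding), and sends $c$ to $d$. We must show that if $f(x)\acc f(y)$ in $D$, then $x\acc y$ in $C$. Here I would use the defining property of synchronization trees: every element $x$ of $C$ has a \emph{unique} finite $\acc$-path $c=c_0\acc c_1\acc\cdots\acc c_n=x$ from the root; write $\height(x)=n$ for its length, and similarly in $D$. The key observations are: (a) $f$ preserves $\acc$, so it maps the unique root-path of $x$ in $C$ to a root-path of $f(x)$ in $D$ of the same length; by uniqueness in $D$ this \emph{is} the root-path of $f(x)$, hence $\height(f(x))=\height(x)$; and (b) in a synchronization tree, $u\acc v$ forces $\height(v)=\height(u)+1$ and moreover $u$ is the immediate predecessor of $v$ along $v$'s root-path.

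Now suppose $f(x)\acc f(y)$. By (a) and (b), $\height(y)=\height(f(y))=\height(f(x))+1=\height(x)+1$, and the root-path of $f(y)$ is the root-path of $f(x)$ followed by one step to $f(y)$. On the other hand, let $c=c_0\acc\cdots\acc c_n=y$ be the root-path of $y$ in $C$, with $n=\height(x)+1$; applying $f$ gives the root-path of $f(y)$, so by uniqueness its penultimate vertex $f(c_{n-1})$ must equal the penultimate vertex of $f(y)$'s root-path, namely $f(x)$. Since $f$ is injective, $c_{n-1}=x$, and therefore $x=c_{n-1}\acc c_n=y$, as desired. (Strictly, to start this argument one should also check $x\neq c$ is handled uniformly, but $\height(x)=\height(f(x))\geq 0$ and $\height(f(y))=\height(f(x))+1\geq 1$ cover all cases, including $x=c$.)

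The only genuinely delicate point — and the step I expect to require the most care — is observation (a): that $f$ \emph{preserves} heights, which rests on the uniqueness clause in the definition of synchronization tree together with the fact that the image of a root-path under a $\acc$-preserving, root-preserving map is again a root-path. Everything else is then forced by the tree structure. I would remark at the end that the hypothesis that both objects lie in $\St_k$ (equivalently, are synchronization trees — the height bound $k$ plays no role here) is essential: the retraction $A+A\to A$ already shows that over $\K$, or even over non-tree Kripke models, a pathwise embedding can be injective without reflecting $\acc$; it is precisely the freeness/uniqueness of paths in a synchronization tree that makes injectivity strong enough to recover reflection of the accessibility relation.
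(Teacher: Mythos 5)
Your step (2) $\Rightarrow$ (3) is correct and is essentially the paper's own argument (compare heights along root-paths and identify the penultimate vertex), and (3) $\Rightarrow$ (1) is fine. The genuine gap is (1) $\Rightarrow$ (2), which you dismiss as ``trivial'' by appealing to the standard fact that monomorphisms between structures are injective on underlying sets. That fact is proved by testing a mono against homomorphisms out of a suitable free or one-generator object, and it holds in categories of \emph{all} homomorphisms; but $\St_k$ is a \emph{wide, non-full} subcategory of $\T_k$ whose morphisms are only the pathwise embeddings. Being a monomorphism in a category with fewer morphisms is a strictly weaker condition (fewer test pairs), so the off-the-shelf argument does not apply, and your write-up never actually uses the pathwise-embedding hypothesis in this step --- a sign that something is missing.

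Concretely: given $f\colon A\to B$ monic in $\St_k$ and $f(x)=f(y)$ with $x\neq y$, you must produce a single object $C$ and two \emph{distinct morphisms of $\St_k$} (i.e.\ pathwise embeddings) $m,n\colon C\to A$ with $fm=fn$, picking out $x$ and $y$ respectively. A pathwise embedding of a path into $A$ with image $\down x$ forces its domain to be isomorphic to $\down x$ as a Kripke model, so you need $\down x\cong\down y$; this is not free of charge (a priori $x$ could satisfy a unary relation $\mathscr{P}$ that $y$ does not). The paper closes exactly this gap: since $f$ is a pathwise embedding, it restricts to isomorphisms $\down x\cong\down f(x)=\down f(y)\cong\down y$, yielding a path $C$ with two embeddings $m,n\colon C\rightrightarrows A$ onto $\down x$ and $\down y$ satisfying $fm=fn$, whence $m=n$ and $x=y$. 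You should supply this (or an equivalent) argument; the rest of your proof, including the closing remark about where the synchronization-tree hypothesis enters in (2) $\Rightarrow$ (3), is sound.
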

\begin{proof}
\ref{i:S_k-monic} $\Rightarrow$ \ref{i:S_k-inj} Suppose $f\colon A\to B$ is a monomorphism in $\St_k$ and $f(x)=f(y)$ for some $x,y\in A$. Since $f$ is a pathwise embedding, the substructures of $A$ with universes $\down x$ and $\down y$, respectively, are isomorphic; just observe that they are both isomorphic to the substructure of $B$ with universe $\down f(x) = \down f(y)$. Thus there are $C\in \St_k$ and embeddings $m,n\colon C\rightarrowtail A$ such that the image of $m$ is $\down x$, and the image of $n$ is $\down y$. Then $f(x)=f(y)$ implies $f\circ m = f\circ n$, and since $f$ is a monomorphism in $\St_k$ we get $m=n$. This shows that $x = y$, and so $f$ is injective.

\ref{i:S_k-inj} $\Rightarrow$ \ref{i:S_k-emb} It is enough to prove that every injective morphism ${f\colon A\to B}$ in $\T_k$ reflects the relation $\acc$. Suppose that $x,y\in A$ satisfy $f(x) \acc f(y)$, and let $n$ be the height of $f(x)$ in the tree order of $B$. If $x'$ is the unique element of height $n$ below $y$, then $x'\acc y$ entails $f(x')\acc f(y)$. But $f(x)$ is the unique element covered by $f(y)$, and so $f(x) = f(x')$. As $f$ is injective, we get $x=x'$ and so $x\acc y$.

\ref{i:S_k-emb} $\Rightarrow$ \ref{i:S_k-monic} Clear, since every embedding is injective.
\end{proof}

\subsection{Presheaves over paths}

In general, the categories $\St_k$ are not complete nor cocomplete. In fact, if the modal vocabulary contains at least one unary relation symbol, then $\St_k$ does not admit an initial object nor a terminal one. In this subsection, we will consider an embedding into a presheaf category over a forest order as a means to (co)complete $\St_k$.

The presheaves we are interested in are defined on categories of synchronization trees whose tree order is a finite chain. We shall refer to the latter as \emph{paths}. For simplicity, we shall work with a small skeleton of the class of paths (e.g., by considering only those paths whose universes are initial segments of the natural numbers). We write $\P_k$ for the (small) full subcategory of~$\St_k$ whose objects are paths in this skeleton. Note that $\P_{\omega}= \bigcup_{k<\omega}{\P_{k}}$.

\begin{remark}\label{rem:Pk-forest}
The category $\P_k$ is a poset, since there is at most one pathwise embedding between any two paths. In fact, every homomorphism of Kripke models whose domain is a path is injective, so the morphisms in $\P_k$ are precisely the embeddings by Lemma~\ref{l:emb-vs-inj}. Therefore, $\P_k$ is a forest order and it has height $k$.
\end{remark}

The reason for considering the category $\P_k$ is that the inclusion $\P_k \hookrightarrow \St_k$ is \emph{dense}. This is the content of the next proposition, which can be understood as exhibiting the presheaf category \[\w{\P_k}\coloneqq [\P_k^\op,\Set]\] as a (co)completion of the category $\St_k$.
\begin{proposition}\label{prop:P_k-dense}
The restricted Yoneda embedding 
\[
\St_k \to \w{\P_k}, \ \ A \mapsto \left( \St_k( - , A)\colon \P_k^\op \to \Set \right)
\] 
is full and faithful. 
\end{proposition}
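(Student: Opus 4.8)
The plan is to exhibit, for each synchronization tree $A\in\St_k$, a canonical family of paths mapping into $A$, and to show that \emph{every} morphism of $\St_k$ from a path into $A$ is one of these; faithfulness and fullness then follow by a short diagram chase. Concretely, for $a\in A$ let $\down a = \{c_0\cvr c_1\cvr\cdots\cvr c_n\}$ be the unique chain from the root to $a$ (which exists since $A$ is a synchronization tree), viewed as a Kripke model with root $c_0$, accessibility the covering relation of the chain, and the unary relations restricted from $A$. This is a synchronization tree of height at most $k$ whose tree order is a chain, hence it is isomorphic to a unique object $P_a\in\P_k$; write $\iota_a\colon P_a\to A$ for the composite of this isomorphism with the inclusion $\down a\into A$. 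Since the inclusion is a homomorphism preserving the root and reflects the unary relations (it is a full restriction), $\iota_a$ is a pathwise embedding, i.e.\ an arrow of $\St_k$.

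The key lemma I would establish first is: \emph{every arrow $x\colon P\to A$ of $\St_k$ with $P\in\P_k$ equals $\iota_a$, where $a\coloneqq x(t)$ and $t$ is the top element of $P$.} Indeed, in a synchronization tree every non-root element has a unique immediate predecessor, so examining $x$ from the top element downwards forces $x$ to restrict to the unique order isomorphism $P\xrightarrow{\sim}\down a$; and because $x$ is a pathwise embedding, the unary relations on $P$ are exactly $\{p : x(p)\in\mathscr{P}^A\}$, which is precisely the structure identifying $P$ with $P_a$ over $A$. This is the only place where the synchronization-tree hypothesis is used in an essential way.

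Faithfulness is then immediate: if $f,g\colon A\to B$ induce the same natural transformation $\St_k(-,A)|_{\P_k}\Rightarrow\St_k(-,B)|_{\P_k}$, then $f\circ\iota_a=g\circ\iota_a$ for all $a\in A$, and evaluating at the top element of $P_a$ gives $f(a)=g(a)$; hence $f=g$. For fullness, given such a natural transformation $\alpha$, I would define $h\colon A\to B$ by $h(a)\coloneqq\alpha_{P_a}(\iota_a)(t_a)$ with $t_a$ the top of $P_a$. Naturality of $\alpha$ along the inclusions $P_b\into P_a$ for $b\le a$ — noting that $\iota_a$ restricts to $\iota_b$ — yields the coherence identity $\alpha_{P_a}(\iota_a)(b)=h(b)$, that is, $h\circ\iota_a=\alpha_{P_a}(\iota_a)$ for every $a$. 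From this one reads off that $h$ preserves the root (use the one-point path $P_{a_0}$), preserves $\acc$ and the unary relations (each $\alpha_{P_a}(\iota_a)$ is a homomorphism), and reflects the unary relations (each $\alpha_{P_a}(\iota_a)$, being an arrow of $\St_k$, is a pathwise embedding); so $h$ is an arrow of $\St_k$. Finally, for any $x\colon P\to A$ in $\St_k$ the key lemma gives $x=\iota_a$ with $a=x(t)$, whence $\alpha_P(x)=\alpha_{P_a}(\iota_a)=h\circ\iota_a=h\circ x$, proving $\alpha$ is induced by $h$.

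I expect the main obstacle to be the key lemma — specifically, the verification that an arrow out of a path is pinned down by its value on the top element together with the pathwise-embedding condition; once that is in place, everything reduces to naturality chases. A minor point to handle carefully is the passage between the literal down-set $\down a$ and its representative in the chosen skeleton $\P_k$, but since $\P_k$ is a poset this only involves the unique isomorphisms available and causes no trouble.
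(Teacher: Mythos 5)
Your proof is correct, and it is essentially the paper's argument made explicit: the paper appeals to the density of paths in $\T_k$, whose content is precisely your key lemma that every arrow $P\to A$ in $\St_k$ with $P\in\P_k$ is of the form $\iota_a$ for a unique $a\in A$, together with the identification of natural transformations with cocones of embeddings (your map $h$ being the mediating morphism). The naturality chases you spell out are exactly the ``straightforward adaptation'' the paper leaves to the reader, so there is nothing further to reconcile.
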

\begin{proof}
This follows from a straightforward adaptation of the observation that the full subcategory of $\T_k$ defined by the paths is dense in $\T_k$. Just observe that pathwise embeddings correspond precisely to (mediating morphisms induced by) cocones of embeddings on diagrams of paths.
\end{proof}

\begin{remark}\label{rem:hom-embeddings}
For any $A\in \St_k$, the presheaf $\St_k( - , A)\colon \P_k^\op \to \Set$ sends a path $P$ to the set of embeddings $P\rightarrowtail A$.
\end{remark}

By virtue of Proposition~\ref{prop:P_k-dense}, we will identify $\St_k$, and thus also $\P_k$, with a full subcategory of $\w{\P_k}$. The idea, developed in the following sections, is that we can give a homotopy theoretic account of some fundamental constructions in modal logic by considering model structures on the presheaf categories $\w{\P_k}$.

\section{A model structure on \texorpdfstring{$\w{\P_k}$}{Pk Hat}}
\label{s:model-structures}
As in the previous section, we fix an arbitrary ordinal $k\leq \omega$. Further, we let $\Pi$ be the set of all embeddings between paths in $\St_k$. 
We shall now define a model structure on the presheaf category $\w{\P_k}$; in view of Remark~\ref{rem:overdetermined}, it suffices to specify what the cofibrations and the fibrant objects in this model structure are.
\begin{proposition}\label{p:model-structure}
The category $\w{\P_k}$ admits a model structure such that:
\begin{enumerate}[label=(\roman*)]
\item\label{i:gen-cof} The cofibrations are precisely the monomorphisms and they coincide with the class $\llp(\rlp{\Pi})$.
\item\label{i:fibrant} A presheaf $X\in \w{\P_k}$ is fibrant just when it satisfies the following property: For all arrows $i\colon P\to Q$ in $\Pi$, if there exists an arrow $Q\to X$ in $\w{\P_k}$ then every arrow $P\to X$ admits a lifting along $i$:
\[\begin{tikzcd}
P \arrow{d}[swap]{i} \arrow{r} & X \\
Q \arrow[dashed]{ur} & {}
\end{tikzcd}\]
\end{enumerate}
\end{proposition}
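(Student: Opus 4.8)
The plan is to realise this model structure as an instance of the general construction of Cisinski model structures on presheaf toposes recalled in Appendix~\ref{app:model-str-proofs}. By Joyal's criterion (Remark~\ref{rem:overdetermined}) a model structure on $\w{\P_k}$ is determined by its cofibrations together with its fibrant objects, so it suffices to exhibit \emph{one} Cisinski model structure on $\w{\P_k}$ whose cofibrations are the monomorphisms and then to check that these monomorphisms coincide with $\llp(\rlp{\Pi})$ and that the fibrant objects are the ones described in~\ref{i:fibrant}. Concretely I would fix an exact cylinder on $\w{\P_k}$ — every presheaf topos carries one — and form the model structure whose anodyne extensions form the smallest such class containing an explicit set $\Lambda$ of monomorphisms built from $\Pi$, specified below; Cisinski's theorem then delivers a combinatorial model structure in which the cofibrations are exactly the monomorphisms and the fibrant objects are those $X$ for which $X\to\one$ lies in $\rlp{\Lambda}$.

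For item~\ref{i:gen-cof} it remains to identify the monomorphisms with $\llp(\rlp{\Pi})$. The class $\llp(\rlp{\Pi})$ is the smallest weakly saturated class containing $\Pi$, whereas in any presheaf topos the monomorphisms form the smallest weakly saturated class containing the subobject-inclusions of representables. So it is enough to show that every subobject $S\rightarrowtail\yo P$, for $P$ a path, lies in the weakly saturated closure of $\Pi$. The key input is that $\P_k$ is a forest (Remark~\ref{rem:Pk-forest}): the down-set $\down P$ is a finite chain, so the subobjects of the representable $\yo P$ form a finite chain $\emptyset\rightarrowtail\yo{P_0}\rightarrowtail\cdots\rightarrowtail\yo P$ whose successive links are precisely the Yoneda images of initial-segment embeddings of paths, hence members of $\Pi$ (modulo the harmless convention of also including in $\Pi$ the inclusions $\emptyset\rightarrowtail\yo{P_0}$ of the empty presheaf into the minimal representables). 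Thus $S\rightarrowtail\yo P$ is a composite of maps in $\Pi$ and so lies in $\llp(\rlp{\Pi})$; the reverse inclusion is immediate since $\Pi$ consists of monomorphisms and these form a weakly saturated class.

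For item~\ref{i:fibrant}, the point is to choose $\Lambda$ so that the conditional lifting condition in~\ref{i:fibrant} is exactly ``right lifting property against $\Lambda$ for maps into $\one$''. For each $i\colon P\to Q$ in $\Pi$ put $n_i\coloneqq i+\id_{\yo Q}\colon\yo P+\yo Q\to\yo Q+\yo Q$; this is a monomorphism, being a coproduct of monomorphisms in a topos. By the Yoneda lemma a lifting problem of $X\to\one$ against $n_i$ is a pair $(x_P,x_Q)\in X(P)\times X(Q)$, and a filler is an element $y\in X(Q)$ with $X(i)(y)=x_P$, the $\yo Q$-component of the filler being unconstrained and always takeable to be $x_Q$. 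Hence $X\to\one$ has the right lifting property against $n_i$ iff $X(Q)=\emptyset$ or $X(i)\colon X(Q)\to X(P)$ is surjective, which — reading $X(P)\cong\w{\P_k}(\yo P,X)$ and $X(Q)\cong\w{\P_k}(\yo Q,X)$ through Yoneda — is exactly the property in~\ref{i:fibrant}. I would therefore take $\Lambda$ to consist of the maps $n_i$ ($i\in\Pi$) together with the basic anodyne extensions of the chosen cylinder, and take the associated localised Cisinski model structure.

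The step I expect to be the main obstacle is reconciling this localised model structure with the clean statement of~\ref{i:fibrant}. A priori its class of anodyne extensions is strictly larger than the weakly saturated closure of $\Lambda$ — Cisinski's axioms also force closure under pushout-products with the cylinder interval — so its fibrant objects are only guaranteed to satisfy the (in principle stronger) condition of having the right lifting property against that whole generated class. One must show that, over $\w{\P_k}$, having the right lifting property against the maps $n_i$ alone already implies having it against the rest of the generated class, and moreover that every presheaf on $\P_k$ is automatically fibrant for the chosen cylinder. Both should be arranged by exploiting the very special shape of $\P_k$ — a forest, of finite height when $k<\omega$ — and by choosing the cylinder judiciously, so that its basic anodyne maps and the pushout-products built from them behave trivially on presheaves over a forest. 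Carrying out this verification, and then reading off the resulting weak equivalences and fibrations, is where the substance of the proof (done in the appendix) should lie.
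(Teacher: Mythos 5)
Your proposal follows the same route as the paper's Appendix~\ref{app:model-str-proofs}: produce a Cisinski model structure, prove item~\ref{i:gen-cof} by showing that $\Pi$ is a cellular model for the monomorphisms using the forest shape of $\P_k$ from Remark~\ref{rem:Pk-forest} (this is Lemma~\ref{l:Pi-generator}), and decode the fibrancy condition of item~\ref{i:fibrant} as the right lifting property of $X\to\one$ against the maps $n_i=i+\id_{Q}\colon P+Q\to Q+Q$ --- which is exactly the Yoneda computation you carry out. Your insistence on adjoining the inclusions $\emptyp\rightarrowtail P_0$ of the initial presheaf into the minimal representables is not merely a convention but a needed correction: these monomorphisms lie in $\rlp{\Pi}$ vacuously (there are no maps from a nonempty representable to $\emptyp$) and do not lift against themselves, so they are not in $\llp{(\rlp{\Pi})}$ for the literal $\Pi$ of the paper; adjoining them is harmless since the corresponding corner maps $P_0\to P_0+P_0$ impose no condition on fibrant objects.

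The step you flag as the main obstacle is closed in the paper by the choice of cylinder, not by any further property of $\P_k$: one takes the exact cylinder $X\times\two$ with $\two=\one+\one$, so that $X\times\two\cong X+X$. With this cylinder your maps $n_i$ are precisely the pushout-products of $i\in\Pi$ with the endpoint inclusions $\partial_e\colon\{e\}\to\two$, i.e.\ the set $\Gamma$ of eq.~\eqref{eq:gamma-arrows}, and the extra closure that Cisinski's axioms demand --- pushout-products with $\{0,1\}\to\two$ --- yields only isomorphisms, because $\{0,1\}\to\two$ is itself an isomorphism. Hence the smallest class of anodyne extensions is generated by $\Gamma$ alone (this is the role of the citation of \cite[Remarque~1.3.15]{Cisinski2006}), $\rlp{\Gamma}$ is the class of naive fibrations, and \cite[Theorem~2.4.19]{Cisinski2019} delivers the model structure with exactly the fibrant objects described in item~\ref{i:fibrant}. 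So your plan is essentially the paper's proof; the single missing ingredient is ``take the coproduct cylinder'', after which the verification you anticipated as substantial becomes immediate. Your guess that the forest structure of $\P_k$ is what makes this part work is off target: it is used only for the cellular-model computation in item~\ref{i:gen-cof}.
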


A proof of Proposition~\ref{p:model-structure} is offered in Appendix~\ref{app:model-str-proofs}; this is a straightforward application of the theory of Cisinski model categories~\cite{Cisinski2006}, but we defer it because an understanding of this machinery is not required in the following sections. Henceforth, we shall regard $\w{\P_k}$ as a model category with respect to the model structure defined in Proposition~\ref{p:model-structure}.

For applications in modal logic, it is useful to have a characterisation of the cofibrations and the trivial fibrations between objects of $\St_k$. To this end, we recall the notion of p-morphism between Kripke models:

\begin{definition}[p-morphism]
A pathwise embedding $f\colon A\to B$ between Kripke models is a \emph{p-morphism} if, for all $x\in A$ and $y\in B$ such that $f(x)\acc y$, there exists $x'\in A$ satisfying $x\acc x'$ and $f(x')=y$.
\end{definition}

Note that any p-morphism whose codomain is a synchronization tree is necessarily surjective, since each element in its codomain can be reached from the distinguished one. 

\begin{lemma}\label{l:charact-co-fib}
The following statements hold for any arrow $f\colon A\to B$ in $\St_k$:
\begin{enumerate}[label=(\alph*)]
\item\label{i:cofib-emb} $f$ is a cofibration if, and only if, it is an embedding of Kripke models.
\item\label{i:fib-p-mor} $f$ is a trivial fibration if, and only if, it is a (surjective) p-morphism.
\end{enumerate}
\end{lemma}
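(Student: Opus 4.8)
The plan is to exploit the characterisation of cofibrations as $\llp(\rlp{\Pi})$ from Proposition~\ref{p:model-structure}\ref{i:gen-cof}, together with the explicit description of fibrant objects in part \ref{i:fibrant}, and translate the abstract lifting conditions into concrete combinatorial statements about synchronization trees. The key observation throughout is Remark~\ref{rem:hom-embeddings}: under the Yoneda embedding, a presheaf $\St_k(-,A)$ evaluated at a path $P$ is the set of embeddings $P \rightarrowtail A$, and a morphism $f \colon A \to B$ in $\St_k$ induces postcomposition on these hom-sets. So a commutative square with a map $i \colon P \to Q$ in $\Pi$ on the left and $f$ on the right amounts to: an embedding $P \rightarrowtail A$, an embedding $Q \rightarrowtail B$, and the compatibility that the two composites $P \to B$ agree; a diagonal filler is an embedding $Q \rightarrowtail A$ making both triangles commute.

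For part \ref{i:cofib-emb}: first I would show every embedding $f \colon A \rightarrowtail B$ in $\St_k$ is a cofibration. Since cofibrations are the monomorphisms in $\w{\P_k}$ and $\St_k \hookrightarrow \w{\P_k}$ is full and faithful (Proposition~\ref{prop:P_k-dense}), it suffices to check that $\St_k(P, -)$ applied to $f$ is injective for every path $P$ — but composing an embedding $P \rightarrowtail A$ with the embedding $f$ and getting equal results forces the original embeddings to be equal, precisely because $f$ is injective (by Lemma~\ref{l:emb-vs-inj}, in $\St_k$ injective = embedding = monic). Conversely, if $f$ is a cofibration, hence a monomorphism in $\w{\P_k}$, then in particular it is monic in $\St_k$, so injective by Lemma~\ref{l:emb-vs-inj}, hence an embedding by the same lemma. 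This direction is short.

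For part \ref{i:fib-p-mor}: I would first unwind what it means for $f \colon A \to B$ to be a trivial fibration, i.e.\ to lie in $\rlp{\Pi}$ — equivalently, to have the right lifting property against all $i \colon P \rightarrowtail Q$ in $\Pi$, which by Remark~\ref{rem:hom-embeddings} says exactly: given an embedding $m \colon P \rightarrowtail A$ and an embedding $n \colon Q \rightarrowtail B$ with $n \circ i = f \circ m$, there is an embedding $\tilde{n} \colon Q \rightarrowtail A$ with $\tilde{n} \circ i = m$ and $f \circ \tilde{n} = n$. By considering the generating embeddings of $\Pi$ — the inclusion of the empty path (if present) into a one-point path, the one-point inclusion $P \hookrightarrow P{+}(\text{one further step})$, etc. — one reads off exactly the two defining clauses of a p-morphism: reflection of unary relations comes from the ability to lift embeddings of single points witnessing $\mathscr{P}$, and the ``back'' condition (for $f(x) \acc y$, find $x' $ with $x \acc x'$, $f(x')=y$) comes from lifting the one-step extension of a path. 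Surjectivity then follows from the remark after the definition of p-morphism. Conversely, given a p-morphism $f$, I would verify the lifting property against an arbitrary $i \colon P \rightarrowtail Q$ in $\Pi$ by induction on the (finite) length of $Q$, extending the embedding $m$ one step at a time along the path $Q$, using the ``back'' condition of $f$ at each step to choose the next vertex in $A$ and the pathwise-embedding property to ensure the unary relations are reflected — this builds $\tilde{n}$, and uniqueness of the one-step extension in a synchronization tree guarantees the triangles commute.

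The main obstacle I anticipate is the careful bookkeeping in the induction of the converse half of \ref{i:fib-p-mor}: one must check that the step-by-step construction of $\tilde{n}$ really lands inside $A$ as an \emph{embedding} (not merely a homomorphism), which requires using both that $f$ reflects $\acc$ and the tree structure so that distinct vertices of $Q$ map to distinct vertices of $A$; and one must handle the edge case of whether $\P_k$ contains an empty/initial object, which affects precisely which generating maps lie in $\Pi$ and hence how the reflection-of-unary-relations clause is extracted. Everything else is a routine translation through the Yoneda embedding.
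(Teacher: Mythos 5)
Your proposal is correct and follows essentially the same route as the paper: part (a) reduces to Lemma~\ref{l:emb-vs-inj} together with the fact that the embedding $\St_k\to\w{\P_k}$ preserves and reflects monomorphisms, and part (b) unwinds the right lifting property against $\Pi$ into a one-step-at-a-time extension of embeddings of paths (for the converse direction) and a lifting of the square built from the inclusions of $\down x$ and $\down y$ (for the forward direction). The only superfluous elements are your plan to extract the reflection-of-unary-relations clause from the lifting property --- since $f$ is a morphism of $\St_k$ it is a pathwise embedding by assumption, so only the back condition needs deriving --- and your worry about the empty path, which is a non-issue because Kripke models are pointed and hence non-empty.
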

\begin{proof}
\ref{i:cofib-emb} By Proposition~\ref{p:model-structure}\ref{i:gen-cof}, we have to show that $f$ is an embedding of Kripke models just when it is monic in $\w{\P_k}$. If $f$ is monic in $\w{\P_k}$ then, a fortiori, it is monic in $\St_k$, and thus an embedding by Lemma~\ref{l:emb-vs-inj}. The converse holds because the embedding $\St_k\to \w{\P_k}$ preserves monomorphisms.

\ref{i:fib-p-mor} We must prove that $f$ is a p-morphism just when all commutative squares in $\St_k$ as displayed below, with $i\in\Pi$, admit a diagonal filler.
\[\begin{tikzcd}
P \arrow{r}{\alpha} \arrow{d}[swap]{i} & A \arrow{d}{f} \\
Q \arrow{r}{\beta}  & B
\end{tikzcd}\]

Suppose that $f$ is a p-morphism and $Q$ is an $m$-element chain 
\[
q_1\acc \cdots \acc q_m.
\]
We identify $P$ with the substructure of $Q$ with universe $\{q_1,\ldots, q_l\}$ for some $l\leq m$. If $l=m$, there is nothing to prove. Otherwise, since $f$ is a p-morphism, there is $x_{l+1}\in A$ such that $\alpha(q_l)\acc x_{l+1}$ and $f(x_{l+1}) = \beta(q_{l+1})$. Extend $\alpha\colon P\to A$ to the substructure of $Q$ with universe $\{q_1,\ldots, q_{l+1}\}$ by sending $q_{l+1}$ to $x_{l+1}$. Note that this extension is a homomorphism because $f$ is a pathwise embedding, and in fact an embedding because so is $\beta$. Applying the previous argument repeatedly, we obtain an embedding $Q\to A$ which is a diagonal filler for the square above.

For the converse direction, suppose that $f$ is a pathwise embedding and let $x\in A$ and $y\in B$ satisfy $f(x) \acc y$. Consider the following commutative square in $\K$, 
\[\begin{tikzcd}
\down x \arrow[hookrightarrow]{r} \arrow{d}[swap]{f_{\restriction \down x}} & A \arrow{d}{f} \\
\down y \arrow[hookrightarrow]{r}  & B
\end{tikzcd}\]
where $\down x$ denotes the substructure of $A$ on the set $\{x'\in A\mid x'\acc x\}$, and similarly for $\down y$. Since $f$ is a pathwise embedding, its restriction $f_{\restriction \down x}$ is an embedding. Hence, the square above lies in $\St_k$ and so it admits a diagonal filler $d\colon \down y\to A$. The element $x'\coloneqq d(y)$ satisfies $x\acc x'$ and ${f(x')=y}$, showing that $f$ is a p-morphism. 
\end{proof}

\subsection{Morita equivalence}
In this subsection we introduce the notion of \emph{Morita equivalence}, which will play a key role in the following. We hasten to point out that the same terminology is used with a different meaning in algebra (for rings inducing equivalent categories of modules) and categorical logic (to refer to theories with equivalent categories of models in any topos); instead, we employ the definition arising from the theory of Lie groupoids (see e.g.~\cite{Zhu2009}):
\begin{definition}[Morita equivalence] 
Let $X,Y$ be objects in a model category $\C$. A \emph{Morita equivalence} between $X$ and $Y$ is a span of trivial fibrations connecting the two objects:
\[ 
X \tfibleft \cdot \tfib Y
\]
We say that $X$ and $Y$ are \emph{Morita equivalent} if there exists a Morita equivalence between them.
\end{definition}

Morita equivalence between tree unravellings of Kripke models captures an important logical equivalence relation between Kripke models:
\begin{theorem}\label{t:p-morph-emb-logic}
The following statements are equivalent for all Kripke models $A,B$ and all finite ordinals $k$:
\begin{enumerate}[label=(\arabic*)]
\item\label{i:Morita-eq-span} $R_k A$ and $R_k B$ are Morita equivalent in $\w{\P_k}$.
\item\label{i:Morita-eq-logic} $A \equiv^{\ML}_k B$, i.e.\ for all modal formulas $\phi$ of depth at most~$k$, $A\models \phi$ if, and only if, $B\models \phi$.
\end{enumerate}
\end{theorem}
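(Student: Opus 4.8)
The plan is to prove the two implications $(2)\Rightarrow(1)$ and $(1)\Rightarrow(2)$ separately, the bridge in both directions being Lemma~\ref{l:charact-co-fib}\ref{i:fib-p-mor}, which identifies the trivial fibrations of $\w{\P_k}$ between synchronization trees with the surjective p-morphisms, together with two classical facts from modal model theory: first, a surjective p-morphism, being in particular a functional bisimulation, preserves \emph{and} reflects the truth of every modal formula; and second, a pointed Kripke model $(A,a)$ and its truncated unravelling $R_k(A,a)$ satisfy the same modal formulas of depth at most $k$ (witnessed by the $k$-bounded bisimulation relating $[a_0,\dots,a_j]$ to $a_j$ at level $k-j$). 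I will also use throughout, as is implicit elsewhere, that the propositional signature is finite, so that up to logical equivalence there are only finitely many formulas of any given depth; in particular every pointed Kripke model admits, for each $m$, a \emph{characteristic formula} $\chi$ of depth at most $m$ with $(N,n)\models\chi$ if and only if $(N,n)\equiv^{\ML}_m(A,a)$. (Without this hypothesis $\equiv^{\ML}_k$ is strictly coarser than $k$-bounded bisimilarity and the statement fails.)

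For $(2)\Rightarrow(1)$, I would exhibit the apex of the Morita equivalence explicitly as a ``synchronized subtree'' of $R_kA$ and $R_kB$: let $C$ be the Kripke model whose elements are the pairs $([a_0,\dots,a_j],[b_0,\dots,b_j])$ of branches of equal length of $R_kA$ and $R_kB$ respectively with $(A,a_i)\equiv^{\ML}_{k-i}(B,b_i)$ for all $i\le j$, with accessibility defined componentwise, with each unary relation interpreted according to (either, hence both, of) the last coordinates, and with distinguished element $([a],[b])$ — which lies in $C$ exactly because $A\equiv^{\ML}_k B$. A routine check shows that $C$ is a synchronization tree of height at most $k$ and that the two projections $\pi_A\colon C\to R_kA$ and $\pi_B\colon C\to R_kB$ are pathwise embeddings. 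The key point is that $\pi_A$ (symmetrically $\pi_B$) is a p-morphism: given $(s,t)\in C$ with $\pi_A(s,t)=[a_0,\dots,a_j]$ and a one-step extension $[a_0,\dots,a_j,a_{j+1}]$ in $R_kA$, take $\chi$ to be the characteristic formula of $(A,a_{j+1})$ of depth $k-j-1$; then $(A,a_j)\models\Diamond\chi$, and since $\Diamond\chi$ has depth at most $k-j$, the equivalence at $b_j$ produces $b_{j+1}$ with $b_j\acc b_{j+1}$ and $(B,b_{j+1})\equiv^{\ML}_{k-j-1}(A,a_{j+1})$, so $\bigl([a_0,\dots,a_j,a_{j+1}],[b_0,\dots,b_j,b_{j+1}]\bigr)$ is the required successor of $(s,t)$. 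Being p-morphisms onto synchronization trees, $\pi_A$ and $\pi_B$ are surjective, hence trivial fibrations by Lemma~\ref{l:charact-co-fib}\ref{i:fib-p-mor}; as $\St_k$ is a full subcategory of $\w{\P_k}$, the span $R_kA\tfibleft C\tfib R_kB$ is a Morita equivalence in $\w{\P_k}$.

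For $(1)\Rightarrow(2)$, suppose $R_kA\xleftarrow{\,p\,}C\xrightarrow{\,q\,}R_kB$ is a span of trivial fibrations in $\w{\P_k}$. The first move is to replace $C$ by an object of $\St_k$. Decompose $C=\coprod_i C_i$ into its connected components; one checks, as in Proposition~\ref{prop:P_k-dense}, that a connected presheaf on $\P_k$ is a connected colimit of representable paths along embeddings and is therefore isomorphic to an object of $\St_k$. Since $R_kA$ is cofibrant, $p$ admits a section, so $C$ — and hence some $C_i$ — is nonempty; fix such an $i$. I would then verify that the restrictions $p_i\colon C_i\to R_kA$ and $q_i\colon C_i\to R_kB$ are again trivial fibrations: testing against a generating monomorphism $m\colon S\to P$ (with $P$ a path and $S$ a possibly empty initial segment of it), a filler $P\to C$ supplied by $p$ is forced to factor through the component $C_i$ whenever the square constrains it on a nonempty $S$, while the remaining case $S=\emptyset$ is handled by starting at the unique root of $C_i$, which lies over the root of the given branch of $R_kA$, and climbing one step at a time using the fillers obtained in the previous case. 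By Lemma~\ref{l:charact-co-fib}\ref{i:fib-p-mor}, $p_i$ and $q_i$ are surjective p-morphisms, so $(C_i,c_i)$ is modally equivalent at all depths to both $(R_kA,[a])$ and $(R_kB,[b])$, whence in particular $(R_kA,[a])\equiv^{\ML}_k(R_kB,[b])$; combining with $(A,a)\equiv^{\ML}_k(R_kA,[a])$ and $(B,b)\equiv^{\ML}_k(R_kB,[b])$ yields $A\equiv^{\ML}_k B$.

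I expect the main obstacle to be exactly this reduction in $(1)\Rightarrow(2)$: the apex of a Morita equivalence in $\w{\P_k}$ need not itself be a synchronization tree — it may, for instance, have several ``roots'' — so one must pass to a connected component and show this preserves the trivial-fibration property. The delicate step there is the lifting against $\emptyset\to P$: surjectivity of $p$ onto the \emph{connected} object $R_kA$ does not localise to $C_i$ on the nose, and one must reconstruct the relevant path-lifts inductively from the root of $C_i$. The remaining ingredients — the explicit construction of $C$ in $(2)\Rightarrow(1)$, the use of characteristic formulas, and the two invariance facts about p-morphisms and unravellings — should be routine.
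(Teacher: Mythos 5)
Your proof is correct, but it follows a genuinely different route from the paper in both directions. For \ref{i:Morita-eq-logic} $\Rightarrow$ \ref{i:Morita-eq-span}, the paper simply invokes the known fact (citing \cite[Theorem~10.13]{AS2021}) that $A \equiv^{\ML}_k B$ is equivalent to the existence of a span of p-morphisms $R_k A \leftarrow \cdot \rightarrow R_k B$ in $\St_k$, and then applies Lemma~\ref{l:charact-co-fib}; you instead build the apex by hand as the tree of pairs of equal-length branches that are $\equiv^{\ML}_{k-i}$-matched levelwise, with the p-morphism condition verified via characteristic formulas. Your explicit flagging of the finite-signature hypothesis is a genuine point: it is needed for this implication and is inherited silently by the paper through the citation. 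For \ref{i:Morita-eq-span} $\Rightarrow$ \ref{i:Morita-eq-logic}, the paper never touches the apex $X$ of the span: it reads off a back-and-forth system $\mathscr{B}$ directly from the two trivial fibrations using diagonal fillers against the cofibrations $\emptyp \to [a]$ and $P \to \down x'$, and then appeals to the arboreal-categories theorem \cite[Theorem~6.4]{AR2023} to convert $\mathscr{B}$ back into a span of p-morphisms in $\St_k$. You instead \emph{straighten} the apex: pass to a connected component, show it lies in the essential image of $\St_k$, and prove the restricted legs are still trivial fibrations (your inductive root-climbing handles the lifting against $\emptyp \to P$ correctly, since the unique root of the component is forced to sit over $[a]$), after which the conclusion is the elementary fact that p-morphisms preserve and reflect modal truth. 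Your route buys independence from the back-and-forth machinery of \cite{AR2023}, at the cost of the one step you rightly identify as delicate and currently only sketch: that every connected presheaf on the forest $\P_k$ is isomorphic to the restricted-Yoneda image of a synchronization tree (equivalently, that the tree-shaped colimit of its category of elements is preserved by the embedding $\St_k \to \w{\P_k}$). That claim is true and provable by inspecting the gluing of branches, but it is more than what Proposition~\ref{prop:P_k-dense} literally states, so it should be spelled out if you write this up.
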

\begin{proof}
It is well known that $A \equiv^{\ML}_k B$ just when there exists a span of p-morphisms $R_k A\leftarrow \cdot \rightarrow R_k B$ in $\K$. In fact, we can slightly improve this characterisation to the effect that $A \equiv^{\ML}_k B$ just when there exists a span of p-morphisms $R_k A\leftarrow \cdot \rightarrow R_k B$ in the subcategory $\St_k$; this is the content of \cite[Theorem~10.13]{AS2021}.
Thus, in view of Lemma~\ref{l:charact-co-fib}, item~\ref{i:Morita-eq-logic} implies item~\ref{i:Morita-eq-span}. 

For the converse, it is enough to prove that if $R_k A$ and $R_k B$ are connected by a span of trivial fibrations in $\w{\P_k}$ then they are also connected by a span of p-morphisms in $\St_k$. This can deduced by applying \cite[Theorem~6.4]{AR2023} to $R_k A$ and $R_k B$.\footnote{Here, $R_k A$ and $R_k B$ are regarded as objects of the \emph{arboreal category} $\T_k$, cf.\ \cite[Examples~5.4]{AR2023}.} The idea is as follows: suppose we are given a span of trivial fibrations 
\[
R_k A \mathrel{\mathop{\tfibleft}^{\alpha}} X \mathrel{\mathop{\tfib}^{\beta}} R_k B
\]
in $\w{\P_k}$, and consider the set
\[
\mathscr{B}\coloneqq \{(\alpha_P(m),\beta_P(m)) \mid P\in \P_k, \ m\in X(P)\}.
\]
We shall identify the embedding $\alpha_P(m)$ with its image, which is a submodel of $R_k A$ of the form $\down x$ for some $x\in R_k A$; similarly for $\beta_P(m)$.
It suffices to show that $\mathscr{B}$ is a \emph{back-and-forth system} in the terminology of~\cite{AR2023}, meaning that it satisfies the following three properties:
\begin{enumerate}[label=(\roman*)]
\item\label{i:initial} If $a\in A$ and $b\in B$ are the distinguished elements, then $([a],[b])\in \mathscr{B}$.
\item\label{i:forth} If $(\down x, \down y)\in \mathscr{B}$ and $x\acc x'$ in $R_k A$, there exists $y'\in R_k B$ such that $y\acc y'$ and $(\down x', \down y') \in \mathscr{B}$.
\item\label{i:back} If $(\down x, \down y)\in \mathscr{B}$ and $y\acc y'$ in $R_k B$, there exists $x'\in R_k A$ such that $x\acc x'$ and $(\down x', \down y') \in \mathscr{B}$.
\end{enumerate}

For item~\ref{i:initial}, if $m_0\colon [a]\to R_k A$ is the inclusion of the one-element submodel~$[a]$, and $\emptyp$ denotes the initial presheaf, we have a commutative square as follows.
\[\begin{tikzcd}
{\emptyp} \arrow{d}[swap]{!} \arrow{r}{!} & X \arrow{d}{\alpha} \\
{[a]} \arrow{r}{m_0} & R_k A
\end{tikzcd}\] 
Since $!\colon \emptyp \to [a]$ is a cofibration and $\alpha$ a trivial fibration, there is a diagonal filler $d\colon [a]\to X$. The composite $\beta\circ d\colon [a] \to R_k B$ is an embedding of Kripke models whose image is $[b]$. It follows in particular that the Kripke models $[a]$ and $[b]$ are isomorphic; if $P\in \P_k$ is a path isomorphic to either (and thus both) of $[a]$ and $[b]$, we see that
\[
([a],[b]) = (\alpha_P(d),\beta_P(d)) \in\mathscr{B}
\]
where, by the Yoneda Lemma, we have identified $d$ with an element of $X(P)$.

For item~\ref{i:forth}, assume that $(\down x, \down y)\in \mathscr{B}$ and $x\acc x'$, and let $P\in \P_k$ and $m\in X(P)$ be such that $(\down x, \down y) = (\alpha_P(m),\beta_P(m))$. There is a commutative square as displayed below,
\[\begin{tikzcd}
P \arrow{d}[swap]{i} \arrow{r}{m} & X \arrow{d}{\alpha} \\
{\down x'} \arrow{r} & R_k A
\end{tikzcd}\] 
where $i$ is the composite of the unique isomorphism $P\cong \down x$ with the inclusion $\down x\to \down x'$, and the bottom horizontal arrow is the inclusion map. Because $i$ is a cofibration and $\alpha$ a trivial fibration, there is a diagonal filler $d\colon \down x'\to X$. The image of $\beta\circ d\colon \down x'\to R_k B$ is of the form $\down y'$ for some $y'\in R_k B$, and $x\acc x'$ implies $y\acc y'$. The pair $(\down x', \down y')$ is easily seen to belong to $\mathscr{B}$.

Finally, item~\ref{i:back} is proved as item~\ref{i:forth}, inverting the roles of $A$ and $B$.
\end{proof}

The proof of Theorem~\ref{t:p-morph-emb-logic} utilises the notion of back-and-forth system from the axiomatic setting of \emph{arboreal categories}~\cite{AR2023}. When applied to the categories $\T_k$, for $k$ a finite ordinal, the ensuing notion of back-and-forth equivalence captures what is known in modal logic as the \emph{$k$-bisimilarity} relation. On the other hand, in the case of the category $\T_{\omega}=\T$, the arboreal notion of back-and-forth equivalence captures the usual \emph{bisimilarity} relation in modal logic, see e.g.\ \cite[\S 2.2]{blackburn2002modal}. Recall that two Kripke models are bisimilar just when they are connected by a span of p-morphisms in $\K$. Therefore, the same proof yields the following result:
\begin{theorem}\label{t:morita-eq-omega}
The following are equivalent for all Kripke models $A,B$:
\begin{enumerate}[label=(\arabic*)]
\item\label{i:Morita-eq-omega-span} $R_{\omega} A$ and $R_{\omega} B$ are Morita equivalent in $\w{\P_{\omega}}$.
\item\label{i:Morita-eq-omega-logic} $A$ and $B$ are bisimilar.
\end{enumerate}
\end{theorem}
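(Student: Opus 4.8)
The plan is to transport the proof of Theorem~\ref{t:p-morph-emb-logic} from finite $k$ to $k=\omega$, replacing the logical equivalence $\equiv^{\ML}_k$ throughout by bisimilarity. Inspecting that proof, finiteness of $k$ enters only through the characterisation of $\equiv^{\ML}_k$ by spans of p-morphisms in $\St_k$; everything else — Lemma~\ref{l:charact-co-fib}, the construction of the back-and-forth system, and the lifting arguments — is stated and valid for every ordinal $k\leq\omega$. So the first thing I would record is the $\omega$-analogue of that characterisation: \emph{$A$ and $B$ are bisimilar if, and only if, $R_\omega A$ and $R_\omega B$ are connected by a span of p-morphisms inside $\St_\omega=\T$}. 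One direction is clear, since a span of p-morphisms in $\St_\omega$ is a fortiori a span of p-morphisms in $\K$, and $A$ is bisimilar to $R_\omega A$ (the counit $R_\omega A\to A$ of the coreflection of Lemma~\ref{l:coreflections-K-Tk} being a p-morphism), likewise $B$ to $R_\omega B$. For the other direction, bisimilarity of $A$ and $B$ together with these two observations and transitivity of bisimilarity yields a span of p-morphisms $R_\omega A\leftarrow Z\rightarrow R_\omega B$ in $\K$; precomposing both legs with the p-morphism $R_\omega Z\to Z$ produces a span with apex $R_\omega Z\in\T$, as required (p-morphisms compose). The second thing to recall is the fact noted just before the statement: a back-and-forth system in the arboreal category $\T=\T_\omega$ witnesses exactly bisimilarity.

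Granting these, the implication \ref{i:Morita-eq-omega-logic}~$\Rightarrow$~\ref{i:Morita-eq-omega-span} is immediate. Bisimilarity of $A$ and $B$ gives a span of p-morphisms $R_\omega A\leftarrow W\rightarrow R_\omega B$ in $\St_\omega$, and since Lemma~\ref{l:charact-co-fib} holds for all $k\leq\omega$, its item~\ref{i:fib-p-mor} (with $k=\omega$) tells us that both legs are trivial fibrations in $\w{\P_\omega}$. Viewing $W$ as an object of $\w{\P_\omega}$ through Proposition~\ref{prop:P_k-dense}, this span is precisely a Morita equivalence between $R_\omega A$ and $R_\omega B$.

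For the converse \ref{i:Morita-eq-omega-span}~$\Rightarrow$~\ref{i:Morita-eq-omega-logic} I would re-run the construction from the proof of Theorem~\ref{t:p-morph-emb-logic} verbatim, now with $k=\omega$. Given a span of trivial fibrations $R_\omega A\mathrel{\mathop{\tfibleft}^{\alpha}}X\mathrel{\mathop{\tfib}^{\beta}}R_\omega B$ in $\w{\P_\omega}$, form the set $\mathscr{B}=\{(\alpha_P(m),\beta_P(m))\mid P\in\P_\omega,\ m\in X(P)\}$ of pairs of submodels $(\down x,\down y)$ of $R_\omega A$ and $R_\omega B$, and verify the three conditions \ref{i:initial}--\ref{i:back} of a back-and-forth system exactly as there. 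Nothing changes: the argument uses only that $\emptyp\to[a]$ and the inclusions $\down x\to\down x'$ are cofibrations (Lemma~\ref{l:charact-co-fib}\ref{i:cofib-emb}, valid for $k=\omega$) and the lifting property of the trivial fibrations $\alpha,\beta$; the relevant submodels $\down x$ are finite chains regardless of the ambient height, so finiteness of $k$ was never used. Hence $\mathscr{B}$ is a back-and-forth system, which by the fact recalled above means $R_\omega A$ and $R_\omega B$ are bisimilar, and therefore so are $A$ and $B$ (each being bisimilar to its full unravelling).

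The one step that requires genuine, if routine, care is producing the span of p-morphisms \emph{with apex in $\T$} rather than merely in $\K$ — the $\omega$-counterpart of the refinement invoked for finite $k$ via \cite[Theorem~10.13]{AS2021}. I expect this to be the only real obstacle, and it is disposed of by the unravelling trick above (or by the general theory of~\cite{AR2023}); the remainder is a literal instance of the already-completed proof of Theorem~\ref{t:p-morph-emb-logic}.
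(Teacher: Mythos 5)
Your proposal is correct and follows essentially the same route as the paper, which itself disposes of this theorem by observing that the proof of Theorem~\ref{t:p-morph-emb-logic} goes through verbatim for $k=\omega$ once $k$-bisimilarity is replaced by bisimilarity and the arboreal back-and-forth notion in $\T_\omega$ is identified with the usual one. Your only addition is to spell out the $\omega$-analogue of the span-refinement (apex in $\St_\omega$ rather than $\K$) via precomposition with the counit $R_\omega Z\to Z$, a detail the paper leaves implicit; this is a correct and welcome elaboration, not a different argument.
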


Theorems~\ref{t:p-morph-emb-logic} and~\ref{t:morita-eq-omega} give a concrete description of Morita equivalence, in the categories of the form $\w{\P_k}$, between objects in the image of the functors~$R_k$. In Corollary~\ref{cor:mor-eq-omega} below we will see that, for the purpose of testing Morita equivalence, we can always work in the larger category $\w{\P_{\omega}}$.

\subsection{Resource indexing}
For all ordinals $m,n$ satisfying $m<n\leq \omega$, the inclusion functor $j_{m,n}\colon \P_m \hookrightarrow \P_n$ induces a ``restriction'' functor
\begin{equation}\label{eq:rho-m-n}
\rho_{m,n}\colon \w{\P_n} \to \w{\P_m}, \ \ X\mapsto X\circ j_{m,n}.
\end{equation}
The functor $\rho_{m,n}$ has a left adjoint $\lambda_{m,n}$ sending $Y\in \w{\P_m}$ to the left Kan extension 
\[
\lambda_{m,n} Y\coloneqq \Lan_{j_{m,n}}Y.
\] 
Explicitly, $\lambda_{m,n} Y$ is the extension of $Y$ obtained by setting 
\[
\lambda_{m,n} Y(P)=\emptyset
\] 
for all $P\in \P_n$ that do not belong to $\P_m$.\footnote{Just observe that the comma category $j_{m,n}/P$ is empty for all $P\in \P_n$ that do not belong to $\P_m$, where $j_{m,n}$ is considered as a functor $\P_m^\op \hookrightarrow \P_n^\op$.}

To improve readability, we write $\rho_m$ and $\lambda_m$ instead of $\rho_{m,m+1}$ and $\lambda_{m,m+1}$, respectively.
We thus have a chain of adjunctions as follows,
\begin{equation}\label{l:chain-adj-lambda-rho}
\begin{tikzcd}[column sep = 3em]
\w{\P_1} \arrow[bend left=50]{r}[description]{\lambda_1} & \w{\P_2} \arrow[l, "\rho_1", "\bot"' {yshift = 3pt}] \arrow[bend left=50]{r}[description]{\lambda_2} & \w{\P_3} \arrow[l, "\rho_2", "\bot"' {yshift = 3pt}] \arrow[bend left=50]{r}[description]{\lambda_3} & \phantom{\w{\P_4}} \arrow[l, "\rho_3", "\bot"' {yshift = 3pt}] \arrow[r, draw=none, "\hspace{-1em}\cdots\ \ \cdots" description] & {}
\end{tikzcd}
\end{equation}
along with adjunctions
\[\begin{tikzcd}[column sep = 3em]
\w{\P_m} \arrow[bend left=50]{r}[description]{\lambda_{m,\omega}} & \w{\P_{\omega}} \arrow[l, "\rho_{m,\omega}", "\bot"' {yshift = 3pt}] 
\end{tikzcd}\]
for all finite ordinals $m$. 

\begin{proposition}\label{p:Quillen-adj-mn}
The following statements hold for all ordinals $m,n$ satisfying $m<n\leq \omega$.
\begin{enumerate}[label=(\alph*)]
\item\label{i:Quillen-adju-lambda-rho} The adjoint pair $\lambda_{m,n}\dashv \rho_{m,n}$ is a Quillen adjunction.
\item\label{i:fully-faithful-lambda} $\lambda_{m,n}$ is fully faithful and preserves trivial fibrations.
\end{enumerate}
\end{proposition}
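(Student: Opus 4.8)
The plan is to establish (a) and (b) together, pivoting on the explicit description of $\lambda_{m,n}$ as ``extension by the empty presheaf''. For part (a), by the remark recorded just before \S\ref{s:tree-unravel} it suffices to check that $\lambda_{m,n}$ preserves cofibrations and that $\rho_{m,n}$ preserves trivial fibrations; in fact, since the cofibrations in each $\w{\P_k}$ are exactly the monomorphisms (Proposition~\ref{p:model-structure}\ref{i:gen-cof}), and $\lambda_{m,n}$ is a left adjoint that is moreover given levelwise (it is the identity on the values at paths in $\P_m$ and sends all other values to $\emptyset$), it visibly sends monomorphisms to monomorphisms. So the whole content of (a) reduces to showing $\rho_{m,n}$ preserves trivial fibrations. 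Here I would invoke Joyal's characterisation (Remark~\ref{rem:overdetermined}) only implicitly, and instead argue directly from lifting properties: trivial fibrations in $\w{\P_n}$ are $\rlp{(\mathrm{Mono})}$, hence it is enough to show that for a trivial fibration $f$ in $\w{\P_n}$, the map $\rho_{m,n}f$ lifts against every mono $i\colon P\rightarrowtail Q$ in $\w{\P_m}$. By the adjunction $\lambda_{m,n}\dashv\rho_{m,n}$, a lifting problem of $i$ against $\rho_{m,n}f$ transposes to a lifting problem of $\lambda_{m,n}i$ against $f$; since $\lambda_{m,n}i$ is again a monomorphism (as just noted) and $f$ is a trivial fibration, the transposed problem has a solution, and transposing back gives the desired filler.

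For part (b), full faithfulness of $\lambda_{m,n}$ is a general fact about left Kan extension along a fully faithful functor: since $j_{m,n}\colon \P_m\hookrightarrow\P_n$ is fully faithful (it is an inclusion of a full subposet, by Remark~\ref{rem:Pk-forest}), the unit $Y\to\rho_{m,n}\lambda_{m,n}Y$ is an isomorphism for every $Y\in\w{\P_m}$, and this is equivalent to $\lambda_{m,n}$ being fully faithful. Concretely, $\rho_{m,n}\lambda_{m,n}Y$ restricts $\lambda_{m,n}Y$ back to $\P_m$, where by construction it agrees with $Y$ on the nose. It remains to show $\lambda_{m,n}$ preserves trivial fibrations. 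Let $g\colon Y\tfib Y'$ be a trivial fibration in $\w{\P_m}$, i.e.\ $g\in\rlp{(\mathrm{Mono})}$ in $\w{\P_m}$; I must show $\lambda_{m,n}g$ lifts against every mono $i\colon P\rightarrowtail Q$ in $\w{\P_n}$. Given such a lifting problem, I would split $Q$ as a presheaf on $\P_n$ into its ``$\P_m$-part'' and the values at paths of height $>m$: precisely, the counit-type comparison exhibits $\lambda_{m,n}\rho_{m,n}Q$ as the subpresheaf of $Q$ supported on $\P_m$, and since $\lambda_{m,n}g$ has value $\emptyset$ on every path not in $\P_m$, any commutative square landing in $\lambda_{m,n}g$ factors through this subpresheaf; thus the lifting problem against $\lambda_{m,n}g$ is equivalent to one over $\P_m$ obtained by applying $\rho_{m,n}$, namely the square with vertical maps $\rho_{m,n}i$ and $g$. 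Now $\rho_{m,n}i$ is a monomorphism (restriction along $j_{m,n}$ preserves monos, being computed levelwise), and $g$ is a trivial fibration, so this square has a filler; pushing it back up through $\lambda_{m,n}$ (on the subpresheaf) and extending by the unique map out of $\emptyset$ on the remaining values yields a filler for the original square.

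I expect the main obstacle to be making the ``restriction of the lifting problem to $\P_m$'' step in part (b) fully rigorous, i.e.\ verifying carefully that a square of presheaves over $\P_n$ with right-hand map $\lambda_{m,n}g$ is completely determined by, and its fillers correspond bijectively to, the fillers of its $\rho_{m,n}$-restriction. The key point is that $\lambda_{m,n}g(P)$ is the empty set — hence a strict subterminal object, admitting no maps in except when the source is empty — for every $P\in\P_n\setminus\P_m$, together with the fact (from the footnote in \S\ref{s:tree-unravel}) that $\lambda_{m,n}Y(P)=\emptyset$ there; this forces both the top and bottom maps of the square to be trivial on those components, so no information is lost or gained by restricting. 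Once this bookkeeping is in place, everything else is a routine transposition/levelwise argument, and (a) in particular is immediate from the adjunction once one observes that $\lambda_{m,n}$ preserves monomorphisms.
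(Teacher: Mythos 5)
Your part (b) is essentially correct, and in fact more complete than the paper's own proof: the paper establishes full faithfulness exactly as you do (the unit is an isomorphism because $j_{m,n}$ is fully faithful) and leaves the preservation of trivial fibrations by $\lambda_{m,n}$ implicit, whereas your observation that any lifting square against $\lambda_{m,n}g$ is forced to be supported on $\P_m$ (because $\lambda_{m,n}Y'$ has empty value outside $\P_m$) makes that step rigorous.

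Part (a), however, has a genuine gap. You reduce the Quillen adjunction to checking that $\lambda_{m,n}$ preserves cofibrations \emph{and} that $\rho_{m,n}$ preserves trivial fibrations. But, as recorded in the remark at the end of \S 2.2, ``the left adjoint preserves cofibrations'' and ``the right adjoint preserves trivial fibrations'' are one and the same condition --- they are equivalent by transposing lifting problems, which is exactly the argument you give. So you have verified a single condition twice and never touched the other, independent half of the definition: that $\lambda_{m,n}$ preserves \emph{trivial} cofibrations (equivalently, that $\rho_{m,n}$ preserves fibrations). This is the genuinely delicate part, since trivial cofibrations are not determined by the monomorphisms alone but involve the weak equivalences of the Cisinski model structure. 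The paper's proof handles it by invoking \cite[Proposition~2.4.40]{Cisinski2019}: it suffices to check that $\lambda_{m,n}$ sends the generating anodyne extensions in the set $\Gamma$ of eq.~\eqref{eq:gamma-arrows} to anodyne extensions of $\w{\P_n}$, which holds because $\lambda_{m,n}$, being a colimit-preserving extension-by-$\emptyp$, preserves the relevant pushout squares and cylinders. Without an argument of this kind your proof of (a) is incomplete.
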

\begin{proof}
\ref{i:Quillen-adju-lambda-rho} It is easy to see that $\rho_{m,n}$ preserves trivial fibrations (it is this fact that will be used in Corollary~\ref{cor:mor-eq-omega} below); in fact, this is equivalent to saying that $\lambda_{m,n}$ preserves cofibrations (i.e., monomorphisms), which is immediate from the explicit description of $\lambda_{m,n}$. The proof that $\lambda_{m,n}$ preserves trivial cofibrations is more delicate and employs concepts defined in Appendix~\ref{app:model-str-proofs}, so we only sketch it. 
In view of \cite[Proposition~2.4.40]{Cisinski2019}, it is enough to show that $\lambda_{m,n}$ sends any arrow in the set $\Gamma$ defined in eq.~\eqref{eq:gamma-arrows} to a trivial cofibration in $\w{\P_n}$ ($\Gamma$ is a generating set for the class of anodyne extensions). The arrows in $\Gamma$ are induced by the universal property of specified pushout squares. Since $\lambda_{m,n}$ fixes these pushout squares, it sends arrows in $\Gamma$ to anodyne extensions in $\w{\P_n}$, which are in particular trivial cofibrations.

\ref{i:fully-faithful-lambda} Recall that, given any adjunction, the left adjoint is fully faithful precisely when the unit of the adjunction is a natural isomorphism. In turn, the unit of the adjunction $\lambda_{m,n}\dashv \rho_{m,n}$ is a natural isomorphism because the inclusion $j_{m,n}\colon \P_m \hookrightarrow \P_n$ is fully faithful; see e.g.\ \cite[Proposition~4.23]{Kelly1982}.
\end{proof}

For each finite ordinal $k$, since the functor $\lambda_{k,\omega}\colon \w{\P_k}\to \w{\P_\omega}$ is fully faithful by Proposition~\ref{p:Quillen-adj-mn}\ref{i:fully-faithful-lambda}, we shall identify $\w{\P_k}$ with a full subcategory of $\w{\P_\omega}$.

\begin{corollary}\label{cor:mor-eq-omega}
Let $k$ be a finite ordinal. Any two presheaves $X,Y\in \w{\P_k}$ are Morita equivalent in $\w{\P_k}$ if, and only if, they are Morita equivalent in $\w{\P_\omega}$.
\end{corollary}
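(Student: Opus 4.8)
The plan is to transport Morita equivalences across the adjunction $\lambda_{k,\omega}\dashv\rho_{k,\omega}$ of Proposition~\ref{p:Quillen-adj-mn}, exploiting that $\lambda_{k,\omega}$ is fully faithful and that \emph{both} functors preserve trivial fibrations.

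For the ``only if'' direction I would start from a Morita equivalence $X\tfibleft Z\tfib Y$ in $\w{\P_k}$ and apply $\lambda_{k,\omega}$, which preserves trivial fibrations by Proposition~\ref{p:Quillen-adj-mn}\ref{i:fully-faithful-lambda}. This produces a span of trivial fibrations $\lambda_{k,\omega}X\tfibleft\lambda_{k,\omega}Z\tfib\lambda_{k,\omega}Y$ in $\w{\P_\omega}$, which, under the identification of $\w{\P_k}$ with a full subcategory of $\w{\P_\omega}$ via $\lambda_{k,\omega}$, is precisely a Morita equivalence between $X$ and $Y$ in $\w{\P_\omega}$.

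For the ``if'' direction I would start from a Morita equivalence $\lambda_{k,\omega}X\tfibleft W\tfib\lambda_{k,\omega}Y$ in $\w{\P_\omega}$ and push it back along $\rho_{k,\omega}$, which preserves trivial fibrations (as recorded in the proof of Proposition~\ref{p:Quillen-adj-mn}), obtaining $\rho_{k,\omega}\lambda_{k,\omega}X\tfibleft\rho_{k,\omega}W\tfib\rho_{k,\omega}\lambda_{k,\omega}Y$ in $\w{\P_k}$. Since $\lambda_{k,\omega}$ is fully faithful, the unit of $\lambda_{k,\omega}\dashv\rho_{k,\omega}$ is a natural isomorphism, so $\rho_{k,\omega}\lambda_{k,\omega}X\cong X$ and $\rho_{k,\omega}\lambda_{k,\omega}Y\cong Y$; composing the two legs with these isomorphisms (which are themselves trivial fibrations, a class closed under composition) yields the desired Morita equivalence $X\tfibleft\rho_{k,\omega}W\tfib Y$ in $\w{\P_k}$.

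I do not expect any serious obstacle here: the whole argument is the routine ``move a span across an adjunction whose left part is fully faithful'' manoeuvre, and all of the nontrivial input is already packaged in Proposition~\ref{p:Quillen-adj-mn}. The only point deserving care is that the apex $W$ of a Morita equivalence in $\w{\P_\omega}$ need not lie in the subcategory $\w{\P_k}$, so passing to $\rho_{k,\omega}W$ is genuinely what brings the witnessing object back inside $\w{\P_k}$.
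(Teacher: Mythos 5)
Your proposal is correct and follows essentially the same route as the paper: push the span forward along $\lambda_{k,\omega}$ (which preserves trivial fibrations) for one direction, and pull it back along $\rho_{k,\omega}$ (also preserving trivial fibrations, with the unit isomorphism identifying $\rho_{k,\omega}\lambda_{k,\omega}X$ with $X$) for the other. The paper's proof is just a more compressed version of the same argument, so there is nothing to add.
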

\begin{proof}
Let us fix an arbitrary finite ordinal $k$ and presheaves $X,Y\in \w{\P_k}$. 

If $X$ and $Y$ are Morita equivalent in $\w{\P_k}$ then they are Morita equivalent in $\w{\P_\omega}$ because $\lambda_{k,\omega}\colon \w{\P_k}\to \w{\P_\omega}$ preserves trivial fibrations by Proposition~\ref{p:Quillen-adj-mn}\ref{i:fully-faithful-lambda}.

Conversely, assume that $X$ and $Y$ are Morita equivalent in $\w{\P_\omega}$. Since the right adjoint $\rho_{k,\omega}$ fixes $X$ and $Y$ (up to isomorphism), and preserves trivial fibrations because it is a right Quillen functor by Proposition~\ref{p:Quillen-adj-mn}\ref{i:Quillen-adju-lambda-rho}, any Morita equivalence ${X \tfibleft Z \tfib Y}$ in $\w{\P_\omega}$ yields a Morita equivalence ${X \tfibleft \rho_{k,\omega}Z \tfib Y}$ in~$\w{\P_k}$.
\end{proof}

\subsection{Factorisation-through}
It follows from the definition of model category that every morphism in $\w{\P_k}$ can be factored as a cofibration followed by a trivial fibration; note that this factorisation is far from being unique. The next result shows that, for morphisms in the full subcategory $\St_k$, we can find such a factorisation within $\St_k$.

\begin{proposition}\label{p:S-k-closed-under-fact}
Any morphism $f\colon A\to B$ in $\St_k$ can be factored as a cofibration followed by a trivial fibration
\[
A \cof X \tfib B
\] 
with $X\in \St_k$. Moreover, if $A,B$ are finite, we can assume that $X$ is finite.
\end{proposition}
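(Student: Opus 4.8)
The plan is to realise the factorisation by an explicit ``mapping cylinder''. First recall that a homomorphism between synchronization trees is level-preserving: $f$ carries the unique root-to-$a$ path of $A$ onto the unique root-to-$f(a)$ path of $B$, so $|f(a)|=|a|$ for every $a\in A$, where $|{\cdot}|$ denotes height in the tree order. Writing $\le$ for the tree order and $\mathrm{Succ}(x)$ for the set of immediate successors of $x$, I would let $X$ be the Kripke model with underlying set
\[
\{(a,b)\in A\times B \mid f(a)\le b,\ \text{and } f(a')\not\le b \text{ for all } a'\in A \text{ with } a\acc a'\},
\]
with $(a,b)\in\mathscr{P}^X$ iff $b\in\mathscr{P}^B$, with distinguished element $(\mathrm{root}(A),\mathrm{root}(B))$, and with $(a,b)\acc(a',b')$ exactly when either $b=f(a)$, $b'=f(a')$ and $a\acc a'$, or $a=a'$, $b\acc b'$ and $(a,b')\in X$. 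Concretely, $X$ is obtained from $A$ by grafting, at each node $a$, a copy of the up-set $\up b^\ast\subseteq B$ of every successor $b^\ast$ of $f(a)$ that is not of the form $f(a')$ for a successor $a'$ of $a$; the pairs $(a,f(a))$ form a copy of $A$ inside $X$, and the remaining pairs fill in the grafted branches.

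I would then carry out the following checks. (1) $X$ is a synchronization tree of height at most $k$: from the grafting picture each $(a,b)$ has a unique downward path — descend inside the grafted $B$-branch to the junction $(a,f(a))$, then descend inside the copy of $A$ — and the height of $(a,b)$ in $X$ equals that of $b$ in $B$, hence is $\le k$. (2) The assignment $a\mapsto(a,f(a))$ is a well-defined embedding of Kripke models $A\cof X$: injectivity and preservation of $\acc$, of the unary relations and of the root are immediate; reflection of the unary relations holds because $f$ is a pathwise embedding; reflection of $\acc$ follows from Lemma~\ref{l:emb-vs-inj}. Hence it is a cofibration by Lemma~\ref{l:charact-co-fib}\ref{i:cofib-emb}. (3) The projection $(a,b)\mapsto b$ is a pathwise embedding $X\to B$ (a homomorphism reflecting the unary relations, by construction) and in fact a p-morphism: at a node $(a,b)$ with $b\ne f(a)$ its successors map bijectively onto $\mathrm{Succ}(b)$, while at a junction node $(a,f(a))$ the edges of the first kind hit $f(\mathrm{Succ}(a))$ and those of the second kind hit the remaining elements of $\mathrm{Succ}(f(a))$, so together they surject onto $\mathrm{Succ}(f(a))$. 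A p-morphism into a synchronization tree is surjective, so $X\tfib B$ is a trivial fibration by Lemma~\ref{l:charact-co-fib}\ref{i:fib-p-mor}. Since the composite $A\to X\to B$ is $f$ by construction, this gives the desired factorisation; and $X\subseteq A\times B$ yields the finiteness clause at once.

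I do not expect a genuine obstacle beyond identifying the object $X$; the verification is routine, though slightly lengthy. The two points that need real care are the well-definedness of the accessibility relation on $X$ — the edges must have targets lying in $X$, which is exactly why only the ``missing'' successors of $f(a)$ get grafted — and the p-morphism property at the junction nodes $(a,f(a))$, where one must use precisely the fact that the grafted branches account for exactly those successors of $f(a)$ that are absent from the image of $f$.
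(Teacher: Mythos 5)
Your construction is correct, but it is not the one the paper uses, and the two objects it produces are genuinely different. The paper builds $X$ as a wide pushout $A^\circ$ of the models $A_x = A \cup_{\down x}\o{fx}$ over all $x\in A$, i.e.\ it grafts at \emph{every} node $x$ a fresh copy of the \emph{entire} up-set $\up fx$ of $B$ (so even when $f$ is an isomorphism, $A^\circ$ is strictly larger than $A$), and then verifies the p-morphism property for the induced mediating map. Your $X$ is the economical variant: inside (the underlying set of) $A\times B$ you graft at each $a$ only the branches over the successors of $f(a)$ that are \emph{missed} by $f$, which is exactly the minimal enlargement of $A$ needed to make the projection to $B$ a p-morphism; your two flagged delicate points (well-definedness of the grafted edges, and surjectivity of the successor map at junction nodes $(a,f(a))$, where first-kind edges cover $f(\mathrm{Succ}(a))$ and second-kind edges cover the missing successors) are precisely the right ones, and they check out. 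What each approach buys: your explicit description makes the finiteness clause immediate ($|X|\le|A|\cdot|B|$; note that $X$ is a subset of the underlying set of $A\times B$, not a submodel of the categorical product, but that is all the finiteness argument needs) and yields a smaller factorising object; the paper's colimit formulation is what powers the subsequent corollary, since it shows the factorisation stays inside any class closed under submodels and (finite) colimits -- a statement that does not fall out of your construction without rephrasing it as a gluing of submodels of $A$ and $B$. One small point of care: when you invoke Lemma~\ref{l:emb-vs-inj} to upgrade $a\mapsto(a,f(a))$ from an injective pathwise embedding to an embedding, you should first record that $X\in\St_k$ (unique predecessors, height of $(a,b)$ equal to that of $b$), which your step (1) does supply.
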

\begin{proof}
Consider a morphism $f\colon A\to B$ in $\St_k$. We shall consider a new synchronization tree $A^\circ$ obtained by attaching to $A$ a copy of $\up fx$, for each $x\in A$, modulo the identification $x \sim fx$. More precisely, $A^\circ$ can be described as a wide pushout as follows. For each $y\in B$, denote by $\o{y}$ the submodel of $B$ consisting of those elements that are comparable with $y$. In other words, the universe of $\o{y}$ is $\up y \cup \down y$. For each $x\in A$, consider the pushout square
\[\begin{tikzcd}
\down x \arrow{d} \arrow{r} & \o{fx} \arrow{d}{\rho_x} \\
A \arrow{r}{\sigma_x} & A_x \arrow[ul, phantom, "\ulcorner", very near start]
\end{tikzcd}\]
in the category $\K$ (equivalently, in $\T_k$), where the left vertical map is the inclusion, and the top horizontal one is the restriction of $f$. Define $A^\circ$ as the wide pushout in $\K$ of the set of morphisms 
\begin{equation}\label{eq:sigma_x-diagram}
\{\sigma_x\colon A\to A_x\mid x\in A\}.
\end{equation}

Note that each $\sigma_x$ is an embedding, hence so is the induced morphism $j\colon A \to A^\circ$. Further, for each $x\in A$ there is an arrow $\tau_x\colon A_x\to B$ induced by the universal property of the pushout as displayed below.
\[\begin{tikzcd}[column sep = 3.5em]
\down x \arrow{d} \arrow{r} & \o{fx} \arrow{d}{\rho_x} \arrow[bend left = 25]{ddr} & \\
A \arrow{r}{\sigma_x} \arrow[bend right = 25]{drr}[description]{f} & A_x \arrow[ul, phantom, "\ulcorner", very near start] \arrow[dashed]{dr}[description]{\tau_x} & \\
& & B
\end{tikzcd}\]
The morphisms $\tau_x$, for $x\in A$, form a compatible cocone over the diagram in eq.~\eqref{eq:sigma_x-diagram}, hence they induce a (unique) mediating morphism $t\colon A^\circ \to B$. By construction, we have 
\[
f = t\circ j\colon A \to A^\circ \to B.
\] 
Since each element of $A^{\circ}$ lies in a substructure of the form $A_{x}$ for some $x\in A$, in order to show that $t$ is a pathwise embedding it is enough to prove that each $\tau_{x}$ is a pathwise embedding. In turn, this follows from the fact that both $\sigma_{x}$ and $\rho_{x}$ are embeddings.

We claim that $t$ is a p-morphism. 
Suppose that $w\in A^\circ$ and $y\in B$ satisfy $t(w)\acc y$. 
Assume for a moment that there exist $x\in A$ and $y'\in\o{fx}$ such that, identifying $A_x$ with a submodel of $A^\circ$, $\rho_x(y')=w$, $y\in\o{fx}$ and $y'\acc y$. Letting $x'\coloneqq \rho_x y \in A_x$, we get
\begin{gather*}
w = \rho_x(y') \acc \rho_x(y) = x' \\
t(x') = t \rho_x (y) = \tau_x \rho_x (y) = y,
\end{gather*}
showing that $t$ is a p-morphism. In order to show that $x$ and $y'$ satisfying these properties always exist, we distinguish two cases:
\begin{enumerate}[label=(\roman*)]
\item If $w$ belongs to the image of $j\colon A\to A^\circ$, let $x\in A$ satisfy $j(x)=w$. Then $y'\coloneqq fx$ satisfies
\begin{gather*}
\rho_x(y') = \rho_x fx = \sigma_x x = j(x)=w \\
y' = fx = tj(x) = t(w) \acc y,
\end{gather*}
so in particular $y\in\o{fx}$.
\item If $w$ does not belong to the image of $j$, there exist $x\in A$ and $y'\in \o{fx}$ such that, identifying $A_x$ with a submodel of $A^\circ$, $\rho_x(y')=w$.
Then
\[
y' = \tau_x \rho_x(y') = t(w) \acc y.
\]
Moreover, note that $y'\notin \down fx$ because $w$ does not belong to the image of $j$, hence $y'\in \up fx$. It follows that $y\in \up fx$ and thus $y\in\o{fx}$.
\end{enumerate}

Since the inclusion $\T_k\into\K$ creates colimits in view of Lemma~\ref{l:coreflections-K-Tk}, the Kripke model $A^\circ$ belongs to $\T_k$, and is finite whenever $A, B$ are finite (for, in that case, the diagram in eq.~\eqref{eq:sigma_x-diagram} is a finite diagram of finite Kripke models). 
Further, when regarded as morphisms of $\St_{k}$, it follows from Lemma~\ref{l:charact-co-fib} that $j\colon A\to A^\circ$ is a cofibration and $t\colon A^\circ \to B$ is a trivial fibration. 
\end{proof}

The proof of Proposition~\ref{p:S-k-closed-under-fact} is an example of how the combinatorial and homotopical approaches co-inform each other: it shows that a somewhat standard model construction has an inherent homotopical nature.

\section{A preservation theorem for modal logic}
\label{s:Los-Tarski}

Recall that a first-order sentence $\phi$ is \emph{preserved under extensions} if, for any structure $A$, and any induced substructure $B$ of $A$,
\[
B\models \phi \ \Longrightarrow \ A\models \phi.
\]
Note that $\phi$ is preserved under extensions just when, for all embeddings between structures $A'\rightarrowtail A$, if $A'\models \phi$ then $A\models \phi$; in the following, we shall tacitly make use of (the modal version of) this equivalent characterisation.

The {\L}o\'s-Tarski preservation theorem for first-order logic states that a first-order sentence is preserved under extensions if, and only if, it is equivalent to an existential sentence~\cite{Los1955,Tarski1955}. Recall that a first-order sentence is said to be \emph{existential} if it does not contain universal quantifiers and all subformulas in the scope of a negation symbol are atomic formulas.

A modal version of the {\L}o\'s-Tarski preservation theorem, to the effect that a modal formula is preserved under extensions of Kripke models just when it is equivalent to an existential modal formula, was proved by van Benthem; cf.~\cite{HNvB1998}. In analogy with the case of first-order logic, a modal formula is \emph{existential} if it does not contain the modality $\Box$ and all subformulas in the scope of a negation symbol are propositional variables.

An alternative proof of the modal preservation theorem was offered by Rosen~\cite{Rosen1997}. The advantage of Rosen's approach is twofold: it applies uniformly both to the class of finite Kripke models and to the class of all Kripke models, and in the process it shows that the depth of formulas (i.e., the maximum number of nested modalities) is preserved. 

We give a proof of Rosen's result based on the homotopical approach introduced in \S\ref{s:model-structures}.

\begin{theorem}\label{th:existential-hpt}
A modal formula of depth $\leq k$ is preserved under extensions of Kripke models if, and only if, it is equivalent to an existential modal formula of depth $\leq k$. Further, the result relativises to finite Kripke models.
\end{theorem}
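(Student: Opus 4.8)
The plan is to establish both implications, the substantial one routed through the factorisation of Proposition~\ref{p:S-k-closed-under-fact}. Throughout I would take $k$ to be a finite ordinal (every modal formula has finite depth, so the case $k=\omega$ reduces to this by replacing $k$ with $\depth(\phi)$) and, since $\phi$ involves only finitely many propositional variables, assume the modal vocabulary is finite; hence there are, up to logical equivalence, only finitely many modal formulas of depth $\leq k$, and only finitely many existential ones. The ``if'' direction---an existential modal formula of depth $\leq k$ is preserved under extensions---is a direct structural induction, using that an embedding of Kripke models fixes the distinguished point, preserves the accessibility relation, and preserves and reflects the unary relations (negations occur only in front of propositional variables and $\Box$ is absent, so these are the only facts needed); modal depth is visibly unchanged. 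This argument is insensitive to finiteness of the models, so it also yields this direction of the relativisation.

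For the ``only if'' direction, suppose $\phi$ has depth $\leq k$ and is preserved under extensions. Let $\psi$ be the conjunction of all existential formulas of depth $\leq k$ that are consequences of $\phi$; by the finiteness remark this is (equivalent to) a single existential formula of depth $\leq k$, and $\phi\models\psi$ trivially. It remains to prove $\psi\models\phi$. The first step is a compactness-free bookkeeping lemma: \emph{if $B\models\psi$, there is a Kripke model $A\models\phi$ such that every existential formula of depth $\leq k$ true in $A$ is true in $B$}---in the notation of Lemma~\ref{l:pathwise-emb-logic}, $A\Rex_k B$. Indeed, if no such $A$ existed, then for every $A\models\phi$ one could choose an existential $\chi_A$ of depth $\leq k$ with $A\models\chi_A$ and $B\not\models\chi_A$; since the $\chi_A$ range over a finite set up to equivalence, the disjunction $\Theta$ of the classes so obtained is existential of depth $\leq k$, is a consequence of $\phi$ (each $A\models\phi$ satisfies its own $\chi_A$, hence $\Theta$), and is refuted by $B$ (it refutes every $\chi_A$)---but $\Theta$ is then one of the conjuncts of $\psi$, contradicting $B\models\psi$.

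Given $B\models\psi$, fix such an $A\models\phi$. By Lemma~\ref{l:pathwise-emb-logic} there is a pathwise embedding $R_k A\to R_k B$, a morphism of $\St_k$; by Proposition~\ref{p:S-k-closed-under-fact} it factors as $R_k A\cof X\tfib R_k B$ with $X\in\St_k$, where by Lemma~\ref{l:charact-co-fib} the left map is an embedding of Kripke models and the right map a surjective p-morphism. Now chase $\phi$ along this diagram: since $\depth(\phi)\leq k$, the bounded bisimulation relating each sequence to its last entry gives $A\equiv^{\ML}_k R_k A$, so $R_k A\models\phi$; preservation under extensions then yields $X\models\phi$; a surjective p-morphism is a functional bisimulation and so preserves and reflects the truth of all modal formulas, giving $R_k B\models\phi$; and $\depth(\phi)\leq k$ together with $R_k B\equiv^{\ML}_k B$ gives $B\models\phi$. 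Hence $\psi\models\phi$ and $\phi\equiv\psi$, an existential formula of depth $\leq k$.

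The relativisation to finite Kripke models is obtained by rerunning the same argument with consequence and equivalence interpreted over the class of finite models: $\psi$ is now the conjunction of the existential depth-$\leq k$ formulas that follow from $\phi$ over finite models, the bookkeeping lemma (quantifying only over finite models) produces a \emph{finite} $A\models\phi$ with $A\Rex_k B$, and---this is the point---$R_k A$ and $R_k B$ are then finite (as $A,B$ and $k$ are), so the ``moreover'' clause of Proposition~\ref{p:S-k-closed-under-fact} supplies a \emph{finite} $X$; thus $R_k A\cof X$ is an extension of finite Kripke models and preservation over finite models applies verbatim. I expect the only delicate points to be (i) the compactness-free bookkeeping and (ii) converting a pathwise embedding (a simulation) into an embedding followed by a p-morphism without leaving the world of finite synchronization trees; but (ii) is precisely what Proposition~\ref{p:S-k-closed-under-fact} already delivers, so the remaining work is essentially the classical preservation-theorem bookkeeping.
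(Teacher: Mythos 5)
Your proposal is correct and follows essentially the same route as the paper: reduce to showing that $\Mod(\phi)$ is upwards closed under $\Rex_k$, realise $A\Rex_k B$ as a pathwise embedding $R_kA\to R_kB$ via Lemma~\ref{l:pathwise-emb-logic}, factor it as an embedding followed by a surjective p-morphism using Proposition~\ref{p:S-k-closed-under-fact} and Lemma~\ref{l:charact-co-fib}, and chase $\phi$ through, with the finite case handled by the ``moreover'' clause. The only difference is that you spell out the compactness-free bookkeeping (finitely many existential formulas of depth $\leq k$ up to equivalence over a finite vocabulary) that the paper dismisses as ``a standard argument'', and that part is also correct.
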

\begin{proof}
It is easy to see that an existential modal formula is preserved under extensions of (finite) Kripke models.

To prove the converse, we rephrase the first part of the statement. Assume that $\phi$ is a modal formula of depth~$k$ preserved under extensions, and let $\Mod(\phi)$ be the full subcategory of $\K$ consisting of the Kripke models that satisfy $\phi$. Let $\Rex_k$ be the preorder on Kripke models defined as follows: $A\Rex_k B$ if, and only if, $B$ satisfies all modal formulas of depth $\leq k$ satisfied by $A$. A standard argument shows that $\phi$ is equivalent to an existential modal formula of depth $\leq k$ just when $\Mod(\phi)$ is upwards closed with respect to~$\Rex_k$. Suppose that $A\in\Mod(\phi)$ and $A\Rex_k B$. Since $A$ and $R_k A$ satisfy the same model formulas of depth $\leq k$, and similarly for $B$ and $R_k B$, it is enough to prove that $R_k A\in \Mod(\phi)$ entails $R_k B\in \Mod(\phi)$. 

By Lemma~\ref{l:pathwise-emb-logic}, the condition $A\Rex_k B$ is equivalent to the existence of a pathwise embedding $f\colon R_k A\to R_k B$.
In view of Proposition~\ref{p:S-k-closed-under-fact}, $f$ can be decomposed into a cofibration followed by a trivial fibration 
\[
R_k A \cof  X \tfib  R_k B
\]
in the Quillen model category $\w{\P_k}$, with $X\in\St_k$. By Lemma~\ref{l:charact-co-fib}, $R_k A \cof  X$ is an embedding and $X \tfib  R_k B$ is a p-morphism. Since $\phi$ is preserved under embeddings, and p-morphisms preserve (and reflect) the validity of modal formulas, we conclude that $R_k A\in \Mod(\phi)$ implies $R_k B\in \Mod(\phi)$. 

To see that the result holds also relative to finite Kripke models, note that $R_k A$ and $R_k B$ are finite whenever $A$ and $B$ are finite (because $k$ is a finite ordinal), and in that case we can assume that $X$ is finite by Proposition~\ref{p:S-k-closed-under-fact}.
\end{proof}

Let us say that a class of synchronization trees $\C\subseteq \St_{k}$ has the \emph{factorisation-through} property if every arrow $A\to B$ in $\St_{k}$ with $A,B\in\C$ admits a factorisation as a cofibration followed by a trivial fibration
\[
A \cof X \tfib B
\] 
such that $X\in \C$. With this terminology, Proposition~\ref{p:S-k-closed-under-fact} states that $\St_{k}$ has the factorisation-through property, and similarly for its subclass consisting of the finite synchronization trees. 

In fact, the proof of Proposition~\ref{p:S-k-closed-under-fact} shows that $\C$ has the factorisation-through property whenever it is closed in~$\St_{k}$ (equivalently, in $\K$) under taking submodels and colimits; moreover, if all members of $\C$ are finite, it suffices to assume that $\C$ is closed under taking submodels and \emph{finite} colimits. Therefore, the following is a consequence of (the proof of) Theorem~\ref{th:existential-hpt}:
\begin{corollary}
Let $\C\subseteq \K$ be a class of (finite) Kripke models closed under submodels and (finite) colimits. If $A\in\C$ entails $R_{k}A\in \C$ for all finite ordinals $k$, then Theorem~\ref{th:existential-hpt} relativises to $\C$.
\end{corollary}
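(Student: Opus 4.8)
The plan is to re-run the proof of Theorem~\ref{th:existential-hpt} essentially verbatim, keeping track of the fact that every Kripke model produced along the way lies in $\C$. First I would make the relativised statement precise: a modal formula of depth $\leq k$ is preserved under extensions inside $\C$ (i.e.\ $A'\models\phi$ implies $A\models\phi$ for every embedding $A'\cof A$ with $A',A\in\C$) if, and only if, it is equivalent over $\C$ to an existential modal formula of depth $\leq k$. The ``only if'' direction is immediate: it is the restriction to embeddings between objects of $\C$ of the fact, already noted, that existential modal formulas are preserved under extensions.

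For the ``if'' direction, suppose $\phi$ has depth $\leq k$ and is preserved under extensions inside $\C$. As in the proof of Theorem~\ref{th:existential-hpt}, I would use the same ``standard argument'', now relativised to $\C$, to reduce the claim to showing that $\Mod(\phi)\cap\C$ is upward closed in $(\C,\Rex_k)$; this relativisation is routine, since $\phi$ involves only finitely many propositional variables and hence there are only finitely many existential modal formulas of depth $\leq k$ up to equivalence over that vocabulary, so one takes $\psi$ to be the disjunction of the existential depth-$\leq k$ types of the models in $\Mod(\phi)\cap\C$ and checks that $\phi$ and $\psi$ have the same models in $\C$. So fix $A\in\Mod(\phi)\cap\C$ and $B\in\C$ with $A\Rex_k B$; I must show $B\models\phi$. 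Since $A\equiv^{\ML}_{k}R_kA$ and $B\equiv^{\ML}_{k}R_kB$ with $\depth(\phi)\leq k$, and since the hypothesis on $\C$ gives $R_kA,R_kB\in\C$, it suffices to prove that $R_kA\models\phi$ entails $R_kB\models\phi$.

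By Lemma~\ref{l:pathwise-emb-logic}, $A\Rex_k B$ yields a pathwise embedding $f\colon R_kA\to R_kB$, i.e.\ a morphism of $\St_k$. The only place where the argument must depart from that of Theorem~\ref{th:existential-hpt} is the appeal to Proposition~\ref{p:S-k-closed-under-fact}: here I would instead invoke the strengthening recorded in the discussion following that proposition, namely that any class closed in $\K$ under submodels and colimits --- or under submodels and finite colimits, if all its members are finite --- has the factorisation-through property. Applying this to $\C$, I factor $f$ as $R_kA\cof X\tfib R_kB$ with $X\in\C$, and with $X$ finite when $A,B$ are. By Lemma~\ref{l:charact-co-fib}, $R_kA\cof X$ is an embedding and $X\tfib R_kB$ is a p-morphism; since $\phi$ is preserved under embeddings between objects of $\C$ and p-morphisms preserve (and reflect) the validity of modal formulas, $R_kA\models\phi$ forces $X\models\phi$, hence $R_kB\models\phi$. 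This establishes the required upward closure, and the finite case follows in the same way using the finite version of the factorisation-through property.

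The only genuinely new ingredient --- and the step I expect to need the most care --- is verifying that the construction behind Proposition~\ref{p:S-k-closed-under-fact} keeps us inside $\C$. But this is already isolated just before the corollary: the synchronization tree $X=(R_kA)^{\circ}$ is assembled from submodels of $R_kA$ and of $R_kB$ by forming (wide) pushouts in $\K$, so closure of $\C$ under submodels and colimits is exactly what is needed, with finite colimits sufficing when the data are finite. Everything else is a transcription of the proof of Theorem~\ref{th:existential-hpt}, so I anticipate no further obstacles.
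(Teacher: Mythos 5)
Your proposal is correct and follows the same route as the paper: reduce everything to the single point where the proof of Theorem~\ref{th:existential-hpt} must change, namely replacing the appeal to Proposition~\ref{p:S-k-closed-under-fact} by the factorisation-through property of $\St_k\cap\C$, which holds because the construction of $(R_kA)^{\circ}$ uses only submodels and (finite, in the finite case) colimits. The extra detail you supply on relativising the ``standard argument'' is consistent with what the paper leaves implicit.
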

\begin{proof}
Fix an arbitrary class $\C$ of (finite) Kripke models closed under (finite) colimits and such that $R_{k}$ restricts to $\C$, for all finite ordinals $k$. Reasoning as in the proof of Theorem~\ref{th:existential-hpt}, it suffices to show that any pathwise embedding $R_{k}A \to R_{k}B$ with $A,B\in\C$ can be factored as 
\[
R_k A \cof  X \tfib  R_k B
\]
with $X\in\St_k\cap \C$. In turn, this follows from the fact that $R_{k}A$ and $R_{k}B$ belong to $\St_k\cap \C$, which has the factorisation-through property because it is closed under submodels and (finite) colimits.
\end{proof}

For example, it follows from the previous corollary that Theorem~\ref{th:existential-hpt} relativises to the class of all (finite) synchronization trees of height at most $k$, for any $k\leq \omega$.

\section{The Hennessy--Milner property}
\label{s:HM}

We shall write $A\equiv^{\ML} B$ to indicate that Kripke models $A$ and $B$ have the same modal theory, i.e.\ $A\models \phi$ if and only if $B\models \phi$ for all modal formulas~$\phi$.
Any two bisimilar Kripke models have the same modal theory, but the converse is not true. The Hennessy--Milner property holds for a class of Kripke models if the restriction of the equivalence relation~$\equiv^{\ML}$ to that class coincides with the bisimilarity relation:
\begin{definition}[Hennessy--Milner property]
A class $\mathscr{H}$ of Kripke models has the \emph{Hennessy--Milner property} if any two $A,B\in\mathscr{H}$ satisfying $A\equiv^{\ML} B$ are bisimilar.
\end{definition}

In this section we will see that the Hennessy--Milner property can be understood, from a homotopical standpoint, as stating the existence of global sections for certain presheaves of Morita equivalences (see Theorem~\ref{th:HM-global-point-equiv}).

For any $A\in\K$, consider the chain of inclusions of partial unravellings
\[
\R A \ \ \coloneqq \ \ R_1 A \to R_2 A \to R_3 A \to \cdots
\]
We denote by $\om$ the ``ordinal category'' generated by the graph 
\[
1 \to 2 \to 3 \to \cdots
\]
and regard $\R A$ as an object of the functor category $[\om, \w{\P_{\omega}}]$. In order to give a homotopical account of the Hennessy--Milner property, we will consider a model structure on $[\om, \w{\P_{\omega}}]$ induced by that of $\w{\P_{\omega}}$; this is known as the \emph{projective model structure}.\footnote{In this case, the projective model structure coincides with the \emph{Reedy model structure} obtained by viewing $\om$ as a Reedy category, cf.\ e.g.\ \cite[Exercise~14.2.9]{Riehl2014}.}

For the next result, let us say that an arrow $\alpha\colon F\to G$ in $[\om, \w{\P_{\omega}}]$ is a \emph{levelwise fibration} if $\alpha_n\colon Fn \to Gn$ is a fibration in $\w{\P_{\omega}}$ for all $n\in\om$; levelwise cofibrations and weak equivalences are defined in a similar fashion.

\begin{proposition}\label{p:reedy-chains}
The category $[\om, \w{\P_{\omega}}]$ admits a model structure in which the fibrations and weak equivalences are, respectively, the levelwise fibrations and levelwise weak equivalences.
\end{proposition}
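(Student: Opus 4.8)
The content of the proposition is the existence of the \emph{projective model structure} on the diagram category $[\om,\w{\P_\omega}]$, and the plan is to deduce it from the standard recognition theorem for cofibrantly generated model categories. The one input needed about the target is that $\w{\P_\omega}$ is combinatorial: being a Cisinski model structure on the presheaf topos $[\P_\omega^\op,\Set]$, it is locally presentable and cofibrantly generated (see the discussion preceding \S\ref{s:tree-unravel} and Appendix~\ref{app:model-str-proofs}). Fix generating sets $I$ and $J$ for, respectively, the cofibrations and the trivial cofibrations of $\w{\P_\omega}$.

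First I would record that, for each $n\in\om$, the evaluation functor $\mathrm{ev}_n\colon[\om,\w{\P_\omega}]\to\w{\P_\omega}$ preserves all limits and colimits (these are computed levelwise) and admits a left adjoint $(\mathrm{ev}_n)_!$, namely left Kan extension along the inclusion of the object $n$; explicitly $(\mathrm{ev}_n)_!(Z)$ is the diagram $m\mapsto\coprod_{\om(n,m)}Z$. Then I would apply the recognition theorem to the sets
\[
\widehat I=\{(\mathrm{ev}_n)_!(i)\mid n\in\om,\ i\in I\},\qquad
\widehat J=\{(\mathrm{ev}_n)_!(j)\mid n\in\om,\ j\in J\}.
\]
The smallness hypotheses are automatic since $[\om,\w{\P_\omega}]$ is locally presentable; the acyclicity hypothesis holds because $(\mathrm{ev}_n)_!$ sends a trivial cofibration $j$ to a levelwise trivial cofibration (at level $m$ it is a coproduct of copies of $j$, and trivial cofibrations, being a left lifting class, are closed under coproducts), while levelwise trivial cofibrations are closed under the cell-complex operations, so that $\widehat J$-cell complexes are levelwise weak equivalences. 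The resulting cofibrantly generated model structure has, by adjunction, $\mathrm{inj}(\widehat J)$ equal to the levelwise fibrations and $\mathrm{inj}(\widehat I)$ equal to the levelwise trivial fibrations, hence the levelwise weak equivalences as its weak equivalences, as claimed. This is nothing but the general existence theorem for projective model structures on diagram categories; see e.g.\ \cite{Riehl2014, Cisinski2019}.

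There is no real obstacle: the argument is a direct specialisation of well-known machinery, the sole genuine hypothesis being that $\w{\P_\omega}$ is cofibrantly generated, which is part of how it was constructed. If one prefers to avoid the recognition theorem altogether, one can instead invoke the Reedy model structure on $[\om,\w{\P_\omega}]$ — which exists for any model-category target because $\om$, with its degree function $n\mapsto n$, is a direct category, hence a Reedy category — and observe that the matching category of $\om$ at each object is empty, so the matching objects are terminal; consequently Reedy fibrations coincide with levelwise fibrations and Reedy weak equivalences with levelwise weak equivalences, as indicated in the footnote.
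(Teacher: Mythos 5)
Your argument is correct and is essentially the paper's: the paper simply cites the general existence theorem for projective model structures on diagram categories (Hirschhorn, Theorem~11.6.1), whose only hypothesis is that $\w{\P_{\omega}}$ is cofibrantly generated, and your proof is exactly the standard derivation of that theorem via the recognition principle applied to the sets $\widehat I$, $\widehat J$. Your closing remark about the Reedy structure on the direct category $\om$ likewise matches the paper's footnote.
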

\begin{proof}
This follows e.g.\ from \cite[Theorem~11.6.1]{Hirschhorn2003}, using the fact that $\w{\P_{\omega}}$ is a cofibrantly generated model category (cf.\ Appendix~\ref{app:model-str-proofs}).
\end{proof}

\begin{remark}
In general, the cofibrations in the model structure on $[\om, \w{\P_{\omega}}]$ described in Proposition~\ref{p:reedy-chains} are not the levelwise cofibrations. However, they can be characterised, as in any model category, as those arrows with the left lifting property with respect to all trivial fibrations.  
\end{remark}

\begin{proposition}\label{p:unnatural-Morita-eq}
The following statements are equivalent for any two Kripke models $A$ and $B$:
\begin{enumerate}[label=(\arabic*)]
\item\label{i:HM} $A\equiv^{\ML} B$ implies that $A$ and $B$ are bisimilar.
\item\label{i:unnatur-natur-Morita} $\R A$ and $\R B$ are Morita equivalent whenever they are levelwise Morita equivalent.
\end{enumerate}
\end{proposition}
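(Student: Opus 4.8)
The plan is to unwind both sides into statements about spans of p-morphisms, using Theorem~\ref{t:morita-eq-omega} and Theorem~\ref{t:p-morph-emb-logic} to translate the homotopical notions back into modal logic. First I would observe that, by Theorem~\ref{t:morita-eq-omega}, $\R A$ and $\R B$ being \emph{levelwise} Morita equivalent means precisely that $R_k A$ and $R_k B$ are Morita equivalent in $\w{\P_\omega}$ for every finite $k$; by Corollary~\ref{cor:mor-eq-omega} this is the same as being Morita equivalent in each $\w{\P_k}$, and by Theorem~\ref{t:p-morph-emb-logic} this is exactly the condition $A \equiv^{\ML}_k B$ for all finite $k$, i.e.\ $A \equiv^{\ML} B$. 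On the other side, $\R A$ and $\R B$ being Morita equivalent (as objects of $[\om,\w{\P_\omega}]$, in the projective model structure of Proposition~\ref{p:reedy-chains}) should translate, via Theorem~\ref{t:morita-eq-omega} applied at the level of the whole chain, to $R_\omega A$ and $R_\omega B$ being Morita equivalent in $\w{\P_\omega}$, i.e.\ to $A$ and $B$ being bisimilar. Granting these two translations, the proposition reduces to the tautology ``$A\equiv^{\ML}B \Rightarrow A,B$ bisimilar'' iff ``($A\equiv^{\ML}_k B$ for all $k$) $\Rightarrow$ $A,B$ bisimilar'', which holds since the two hypotheses coincide.

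The substantive content is therefore the identification of Morita equivalences in $[\om,\w{\P_\omega}]$ between the chains $\R A$ and $\R B$ with spans of p-morphisms $R_\omega A \leftarrow \cdot \rightarrow R_\omega B$ in $\St_\omega$. For the easy direction, a span of p-morphisms $R_\omega A \xleftarrow{\alpha} C \xrightarrow{\beta} R_\omega B$ in $\St_\omega=\T_\omega\cap\St$ restricts, under the coreflector $R_k$ (equivalently, by truncating at height $k$), to a compatible family of spans between the $R_k$'s; assembling these gives an object $\mathrm{R}C$ of $[\om,\w{\P_\omega}]$ together with levelwise trivial fibrations onto $\R A$ and $\R B$, and a levelwise trivial fibration between cofibrant-fibrant-type diagrams is a genuine trivial fibration in the projective structure once one checks the relevant lifting property — this is where Kenneth Brown's lemma style reasoning, or directly the characterisation of projective (co)fibrations, comes in. For the converse, given a Morita equivalence $\R A \tfibleft \mathcal{Z} \tfib \R B$ in the projective structure, I would pass to the colimit (equivalently, evaluate the relevant union of the diagram) to obtain objects in $\w{\P_\omega}$; since colimits of trivial fibrations along the chain $\om$ stay trivial fibrations in these combinatorial presheaf categories (filtered colimits of monomorphisms are monomorphisms, and one checks the right-lifting property against the finitely-presentable generators $\Pi$), one gets a Morita equivalence between $\colim_k R_k A = R_\omega A$ and $R_\omega B$ in $\w{\P_\omega}$, hence by Theorem~\ref{t:morita-eq-omega} a bisimulation.

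The main obstacle I expect is the passage between the diagram category $[\om,\w{\P_\omega}]$ and $\w{\P_\omega}$ itself: one must argue that a projective Morita equivalence of chains is equivalent information to a Morita equivalence of the colimits, and this is not purely formal because the \emph{middle} object of a projective Morita equivalence need not be a constant or coreflected diagram, and its colimit need not be (the unravelling of) an honest synchronization tree. The cleanest route is probably to avoid reconstructing a tree and instead argue directly in $\w{\P_\omega}$: show that $\colim$ along $\om$ is a left Quillen functor $[\om,\w{\P_\omega}]\to\w{\P_\omega}$ for the projective structure (it is left adjoint to the constant-diagram functor, which is right Quillen since it preserves levelwise/all fibrations), hence preserves trivial cofibrations, and separately verify it preserves trivial fibrations by the finite-presentability argument above; then $\colim$ sends any Morita equivalence to a Morita equivalence, and since $\colim\R A \cong R_\omega A$ and $\colim\R B\cong R_\omega B$ in $\w{\P_\omega}$ we are done in one direction, the other being the restriction argument already sketched. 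Verifying ``$\colim$ preserves trivial fibrations in $\w{\P_\omega}$'' is the one genuinely technical point, and it rests on the observation that trivial fibrations are detected by lifting against the set $\Pi$ of embeddings between (finite) paths, all of which are finitely presentable, so that a filtered colimit of squares with trivial-fibration right edges still admits fillers.
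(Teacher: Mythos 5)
Your proposal is correct and follows essentially the same route as the paper: translate levelwise Morita equivalence into $A\equiv^{\ML}B$, reduce the other condition to Morita equivalence of the colimits $R_\omega A\cong\colim\R A$ and $R_\omega B\cong\colim\R B$ via Theorem~\ref{t:morita-eq-omega}, and establish that $\om$-indexed colimits preserve trivial fibrations by lifting against the finitely presentable generators in $\Pi$ (the paper packages this last step as a citation of Hovey's Lemma~7.4.1 together with the same finite-presentability remark). The only cosmetic point is that no extra lifting check or Ken Brown-style argument is needed to see that a levelwise trivial fibration is a trivial fibration in the projective structure: this is immediate from Proposition~\ref{p:reedy-chains}, since both $\Fib$ and $\We$, hence $\Fib\cap\We$, are computed levelwise.
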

\begin{proof}
By Theorem~\ref{t:p-morph-emb-logic}, the diagrams $\R A$ and $\R B$ are levelwise Morita equivalent just when $A\equiv^{\ML}_n B$ for all $n\in\N$; in turn, this is equivalent to $A\equiv^{\ML} B$. Further, any Kripke model $C$ is bisimilar to its full unravelling $R_\omega C$. Hence, it suffices to show that $R_\omega A$ and $R_\omega B$ are bisimilar precisely when $\R A$ and $\R B$ are Morita equivalent in $[\om, \w{\P_{\omega}}]$.

Note that $\colim{\R C}\cong R_\omega C$ for any Kripke model $C$. Thus, by Theorem~\ref{t:morita-eq-omega}, it is enough to show that $\R A$ and $\R B$ are Morita equivalent in $[\om, \w{\P_{\omega}}]$ just when $\colim{\R A}$ and $\colim{\R B}$ are Morita equivalent in $\w{\P_{\omega}}$. Any Morita equivalence between the colimits yields a Morita equivalence between the diagrams, by restricting at all finite levels. Conversely, recall from Proposition~\ref{p:model-structure}\ref{i:gen-cof} that the generating set $\Pi$ of cofibrations in $\w{\P_{\omega}}$ consists of arrows between finitely presentable objects. It follows that, if $F,G\in [\om, \w{\P_{\omega}}]$ are Morita equivalent diagrams consisting of cofibrations in $\w{\P_{\omega}}$, their colimits are also Morita equivalent;\footnote{In fact, these categorical colimits are also \emph{homotopy colimits}.} see e.g.\ \cite[Lemma~7.4.1]{Hovey99}. In particular, this applies to the diagrams $\R A$ and $\R B$, thus concluding the proof.
\end{proof}

Given two objects $X,Y \in \w{\P_{\omega}}$, we would like to consider the ``presheaf of Morita equivalences'' between $X$ and $Y$. In general, there may be a proper class of Morita equivalences between any two objects, hence we restrict our attention to a small set of such equivalences:
\begin{lemma}\label{l:morita-eq-subobject}
Two presheaves $X,Y \in \w{\P_k}$ are Morita equivalent just when there are a subobject $W\to X\times Y$ and a Morita equivalence $X \tfibleft W \tfib Y$.
\end{lemma}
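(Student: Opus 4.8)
The plan is to dispose of the easy implication and then concentrate on the converse. If there is a subobject $W\into X\times Y$ together with a Morita equivalence $X \tfibleft W \tfib Y$, then $X$ and $Y$ are Morita equivalent by the very definition of Morita equivalence, so there is nothing to prove in that direction. For the converse, I would start from a span of trivial fibrations $X \xleftarrow{\alpha} Z \xrightarrow{\beta} Y$ and take $W$ to be the image of the induced morphism $(\alpha,\beta)\colon Z\to X\times Y$. Since $\w{\P_k}$ is a presheaf topos, limits, colimits and image factorisations are computed pointwise, so $W\into X\times Y$ is the subpresheaf given at each path $P\in\P_k$ by $W(P)=\{(\alpha_P(z),\beta_P(z))\mid z\in Z(P)\}$; this is visibly a subobject of $X\times Y$. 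Writing $p\colon W\to X$ and $q\colon W\to Y$ for the two composites with the product projections, the remaining task is to show that $p$ and $q$ are trivial fibrations.

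The bulk of the work is to prove that $p$ is a trivial fibration, the case of $q$ being symmetric. By Proposition~\ref{p:model-structure}\ref{i:gen-cof}, $\Pi$ is a set of generating cofibrations, so it suffices to solve lifting problems for $p$ against the embeddings $i\colon P\to Q$ between paths. Recalling that, after the identification of Proposition~\ref{prop:P_k-dense}, a path is the presheaf it represents, the Yoneda lemma turns such a lifting problem into the following concrete data: an element of $W(P)$, which by construction has the form $(\alpha_P(z_0),\beta_P(z_0))$ for some $z_0\in Z(P)$, together with an element $\bar x\in X(Q)$ restricting along $i$ to $\alpha_P(z_0)$. Since $\alpha$ is itself a trivial fibration, the square with top $z_0$, bottom $\bar x$, left $i$ and right $\alpha$ admits a filler $\bar z\in Z(Q)$ that restricts to $z_0$ along $i$ and satisfies $\alpha_Q(\bar z)=\bar x$; then $(\alpha_Q(\bar z),\beta_Q(\bar z))\in W(Q)$ is the desired diagonal filler, the two triangle identities being immediate from naturality of $\alpha$ and $\beta$ together with $z_0 = Z(i)(\bar z)$.

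The point that needs care --- and the reason this cannot be done more cheaply --- is precisely the reduction to lifting against $\Pi$. One cannot simply factor $(\alpha,\beta)$ in the model structure and postcompose with a projection, because a projection $X\times Y\to X$ is a fibration only when $Y$ is fibrant, and no such hypothesis is at hand. What makes the image construction succeed is that the objects appearing in $\Pi$ are representable: a morphism out of a representable object is an element, and the explicit pointwise description of $W$ then lets one pick a preimage $z_0\in Z(P)$ of a given element of $W(P)$. No such choice is available, functorially, for morphisms out of a general cofibrant object, so verifying the lifting property directly against all monomorphisms would be of no use; it is the generation of the cofibrations by $\Pi$, recorded in Proposition~\ref{p:model-structure}\ref{i:gen-cof}, that does the essential work.
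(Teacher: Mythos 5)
Your proposal is correct and follows essentially the same route as the paper: take the image factorisation of $\langle\alpha,\beta\rangle\colon Z\to X\times Y$ and verify the lifting property of the projections against $\Pi$ by first lifting an element of $W(P)$ back to $Z(P)$ (the paper phrases this as projectivity of representables, you phrase it via the pointwise description of the image --- these are the same fact) and then using that $\alpha$ is a trivial fibration. Your closing remark correctly identifies the role of representability of the objects in $\Pi$, which is exactly the point the paper's proof exploits.
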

\begin{proof}
For the non-trivial direction, suppose that 
\[
X \mathrel{\mathop{\tfibleft}^{f}} Z \mathrel{\mathop{\tfib}^{g}} Y
\]
is a Morita equivalence in $\w{\P_k}$, and let $W$ be the image of ${\langle f, g \rangle\colon Z\to X\times Y}$. In other words, we consider the (epi, mono) factorisation of $\langle f, g \rangle$, as displayed below.
\[\begin{tikzcd}
Z \arrow{r}{e} \arrow[bend left = 40]{rr}[description]{\langle f, g \rangle} & W \arrow{r}{m} & X\times Y
\end{tikzcd}\]
We claim that the following is a Morita equivalence
\[
X \mathrel{\mathop{\tfibleft}^{\tilde{f}}} W \mathrel{\mathop{\tfib}^{\tilde{g}}} Y
\]
where $\tilde{f}$ and $\tilde{g}$ are, respectively, the composite of $m$ with the product projections $X\times Y \to X$ and $X\times Y \to Y$. We only show that $\tilde{f}$ is a trivial fibration; the proof for $\tilde{g}$ is the same, mutatis mutandis. It suffices to prove that $\tilde{f}$ has the right lifting property with respect to any $i\in \Pi$. Consider arrows $\alpha\colon P\to W$ and $\beta\colon Q\to X$ such that $\tilde{f}\circ \alpha = \beta\circ i$. Since representable presheaves are projective objects, there exists $\alpha'\colon P\to Z$ such that $e\circ \alpha' = \alpha$. We therefore get a commutative diagram as shown below.
\[\begin{tikzcd}
{} & {} & {} & Z \arrow{dd}{e} \\
{} & {} & {} & {} \\
{} & P \arrow{uurr}{\alpha'} \arrow{rr}{\alpha} \arrow{dl}[swap]{i} & {} & W \arrow{dl}{\tilde{f}} \\
Q \arrow{rr}{\beta}  & {} & X & {}
\ar[from=1-4,to=4-3, crossing over, "f", swap]
\end{tikzcd}\]
Because $f$ is a trivial fibration, there exists an arrow $d\colon Q\to Z$ such that $d\circ i = \alpha'$ and $f\circ d = \beta$. It follows that $d'\coloneqq e\circ d\colon Q \to W$ is a diagonal filler for the bottom square; just observe that 
\[
d'\circ i = e \circ d \circ i = e\circ \alpha' =\alpha
\]
and
\[
\tilde{f} \circ d' = \tilde{f} \circ e \circ d = f\circ d = \beta.\qedhere
\]
\end{proof}

Let $X$ and $Y$ be any two objects of $\w{\P_k}$. We write
\[
\M(X,Y)
\] 
for the set of all Morita equivalences between $X$ and $Y$ witnessed by a subobject of $X\times Y$. In other words, the elements of $\M(X,Y)$ are (equivalence classes of) jointly monic spans of trivial fibrations $X \tfibleft \cdot \tfib Y$. Note that $\M(X,Y)$ is a set, as opposed to a proper class, because $\w{\P_k}$ is well-powered.

Given Kripke models $A,B\in \K$, consider the functor
\begin{equation}\label{eq:presheaf-Morita-equiv}
\M^{A,B}\colon \om^{\op}\to \Set, \ \ \M^{A,B}(n)\coloneqq \M(R_n A, R_n B).
\end{equation}
On morphisms, $\M^{A,B}$ sends an arrow $m \to n$ to the function 
\[
\M(R_n A, R_n B) \to \M(R_m A, R_m B)
\] 
that sends a jointly monic span of trivial fibrations $R_n A \tfibleft Z \tfib R_n B$ to its image
\[
R_m A \tfibleft \rho_{m,n} Z \tfib R_m B
\]
under the functor $\rho_{m,n}\colon \w{\P_n} \to \w{\P_m}$ from eq.~\eqref{eq:rho-m-n}. Note that the latter span is jointly monic because $\rho_{m,n}$ is right adjoint, and consists of trivial fibrations in view of Proposition~\ref{p:Quillen-adj-mn}\ref{i:Quillen-adju-lambda-rho}.

Let us say that a presheaf $F\in [\om^\op,\Set]$ is \emph{strongly non-empty} provided that $Fn\neq\emptyset$ for all $n\in\om$. A \emph{global section} of $F$ is a morphism $\one\to F$, where $\one$ is the terminal object of $[\om^\op,\Set]$. With this terminology, we have the following consequence of Proposition~\ref{p:unnatural-Morita-eq} and Lemma~\ref{l:morita-eq-subobject}.

\begin{theorem}\label{th:HM-global-point-equiv}
The following statements are equivalent for any class $\mathscr{H}$ of Kripke models:
\begin{enumerate}
\item $\mathscr{H}$ has the Hennessy--Milner property.
\item For all $A,B\in \mathscr{H}$, if the presheaf $\M^{A,B}$ is strongly non-empty then it has a global section.
\end{enumerate}
\end{theorem}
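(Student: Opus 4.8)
The plan is to unwind both statements through the results already established and show they are essentially the same assertion about the presheaf $\M^{A,B}$. First I would observe that a global section $\one \to \M^{A,B}$ is, by the description of limits in $[\om^\op,\Set]$, precisely a compatible family $(w_n)_{n\in\om}$ with $w_n \in \M(R_n A, R_n B)$ and $\M^{A,B}(m\le n)(w_n) = w_m$ for all $m \le n$; that is, a levelwise Morita equivalence $R_n A \tfibleft Z_n \tfib R_n B$ by jointly monic spans, coherent under the restriction functors $\rho_{m,n}$. Such a coherent family assembles into a single arrow $Z \in [\om, \w{\P_\omega}]$ together with a span $\R A \tfibleft Z \tfib \R B$ in $[\om, \w{\P_\omega}]$ whose components are (trivial) fibrations, hence a Morita equivalence of diagrams in the projective model structure of Proposition~\ref{p:reedy-chains}. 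Conversely, a Morita equivalence of diagrams $\R A \tfibleft Z \tfib \R B$, after replacing $Z$ levelwise by the image of the pairing map exactly as in Lemma~\ref{l:morita-eq-subobject} (applied levelwise, and compatibly since images are preserved by the right adjoints $\rho_{m,n}$), yields a global section of $\M^{A,B}$. So ``$\R A$ and $\R B$ are Morita equivalent'' is equivalent to ``$\M^{A,B}$ has a global section''.

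Next I would identify strong non-emptiness of $\M^{A,B}$ with levelwise Morita equivalence of $\R A$ and $\R B$: $\M^{A,B}(n) = \M(R_n A, R_n B) \neq \emptyset$ for all $n$ says exactly that $R_n A$ and $R_n B$ are Morita equivalent for every finite $n$, using Lemma~\ref{l:morita-eq-subobject} to pass between arbitrary Morita equivalences and those witnessed by a subobject. With these two identifications in hand, statement~(2) of the theorem --- ``for all $A,B\in\mathscr H$, if $\M^{A,B}$ is strongly non-empty then it has a global section'' --- translates verbatim into ``for all $A,B\in\mathscr H$, if $\R A$ and $\R B$ are levelwise Morita equivalent then they are Morita equivalent (in $[\om,\w{\P_\omega}]$)'', which is precisely condition~\ref{i:unnatur-natur-Morita} of Proposition~\ref{p:unnatural-Morita-eq}. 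That proposition already equates this with condition~\ref{i:HM}, namely ``$A\equiv^{\ML} B$ implies $A$ and $B$ are bisimilar'' for all $A, B \in \mathscr H$, which is by definition the Hennessy--Milner property for $\mathscr H$. Chaining these equivalences gives the theorem.

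The step I expect to require the most care is the passage between a global section of $\M^{A,B}$ and an honest Morita equivalence of the diagrams $\R A, \R B$ in $[\om,\w{\P_\omega}]$ --- specifically, checking that the span assembled from a coherent family $(w_n)$ really has fibrations (equivalently trivial fibrations, since all objects are cofibrant and weak equivalences are levelwise) as its components in the projective model structure, and, in the reverse direction, that taking levelwise (epi, mono) factorisations of the pairing map produces a diagram in $[\om,\w{\P_\omega}]$ at all, i.e.\ that the images $W_n$ are functorial in $n$. Functoriality here is exactly the observation that $\rho_{m,n}$, being a right adjoint between presheaf toposes, preserves the (epi, mono) factorisation, so the subobjects $W_n \hookrightarrow R_n A \times R_n B$ restrict to one another. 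Everything else is a matter of transcribing the definitions of limits and terminal objects in the presheaf category $[\om^\op,\Set]$ and invoking Proposition~\ref{p:unnatural-Morita-eq} and Lemma~\ref{l:morita-eq-subobject}.
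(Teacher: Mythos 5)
Your overall strategy coincides with the paper's: reduce statement (2) to condition \ref{i:unnatur-natur-Morita} of Proposition~\ref{p:unnatural-Morita-eq} via Lemma~\ref{l:morita-eq-subobject}, and then invoke that proposition. The identification of strong non-emptiness with levelwise Morita equivalence is fine, as is the passage from a global section to a Morita equivalence of the diagrams $\R A$, $\R B$ in the projective model structure (the connecting maps $W_n\to W_{n+1}$ come from the counits $\lambda_n\rho_n W_{n+1}\to W_{n+1}$, and levelwise trivial fibrations are trivial fibrations there).

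There is, however, a genuine gap in the converse passage, which is exactly the step needed for the direction (1)~$\Rightarrow$~(2). Given a Morita equivalence of diagrams $\R A \tfibleft Z \tfib \R B$, taking levelwise images $W_n \coloneqq \mathrm{im}\,\langle f_n,g_n\rangle$ does \emph{not} in general produce a global section: what $\M^{A,B}$ requires is the strict equality $\rho_n W_{n+1}=W_n$ of subobjects of $R_nA\times R_nB$. Your justification (that $\rho_{m,n}$ preserves (epi, mono) factorisations) only yields $\rho_n W_{n+1}=\mathrm{im}\bigl(\rho_n Z_{n+1}\to R_nA\times R_nB\bigr)$, and naturality of the span only gives the inclusion $W_n\subseteq \rho_n W_{n+1}$; equality would require the comparison map $Z_n\to \rho_n Z_{n+1}$ to be surjective on images, which nothing guarantees. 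For instance, with $A=B$ one can take $Z_n=R_nA$ with the identity span and $Z_{n+1}=R_{n+1}A + R_{n+1}A$ with first leg the fold map and second leg $(\id,\sigma)$ for a nontrivial automorphism $\sigma$, the connecting map landing in the first summand: the images are then strictly nested. The repair is the route the paper takes: from bisimilarity pass first to a \emph{single} jointly monic span of trivial fibrations $R_\omega A \tfibleft W \tfib R_\omega B$ in $\w{\P_\omega}$ (Theorem~\ref{t:morita-eq-omega} plus Lemma~\ref{l:morita-eq-subobject}), and then restrict it along the functors $\rho_{n,\omega}$; compatibility is now automatic from $\rho_{m,n}\circ\rho_{n,\omega}=\rho_{m,\omega}$. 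Equivalently, in your scheme you should apply the colimit argument from the proof of Proposition~\ref{p:unnatural-Morita-eq} before taking images, rather than taking images levelwise.
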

\begin{proof}
Fix arbitrary Kripke models $A,B\in \mathscr{H}$ and note that, by Lemma~\ref{l:morita-eq-subobject}, the presheaf $\M^{A,B}$ is strongly non-empty just when $R_n A$ and $R_n B$ are Morita equivalent for all finite ordinals $n$. In turn, the latter condition is equivalent to $A\equiv^{\ML} B$ by Theorem~\ref{t:p-morph-emb-logic}. It remains to prove that $A$ and $B$ are bisimilar if, and only if, $\M^{A,B}$ admits a global section.

The global sections of $\M^{A,B}$ can be identified with the indexed sets 
\[
\{ (R_n A \mathrel{\mathop{\tfibleft}^{f_n}} W_n \mathrel{\mathop{\tfib}^{g_n}} R_n B) \in \M(R_n A, R_n B) \mid n\in\om \}
\]
such that $\rho_n(f_{n+1}) = f_n$ and $\rho_n(g_{n+1}) = g_n$ for all $n$, where the functor $\rho_{n}$ is as in eq.~\eqref{l:chain-adj-lambda-rho}. If $A$ and $B$ are bisimilar, then by Theorem~\ref{t:morita-eq-omega} and Lemma~\ref{l:morita-eq-subobject} there is a jointly monic span of trivial fibrations 
\[
R_{\omega} A \mathrel{\mathop{\tfibleft}^{f}} W \mathrel{\mathop{\tfib}^{g}} R_{\omega} B
\]
in $\w{\P_{\omega}}$, which induces a global section
\[
\{ (R_n A \mathrel{\mathop{\tfibleft}^{\rho_{n,\omega}(f)}} \rho_{n,\omega}W \mathrel{\mathop{\tfib}^{\rho_{n,\omega}(g)}} R_n B) \in \M(R_n A, R_n B) \mid n\in\om \}.
\]
Conversely, the existence of a global section implies that the diagrams $\R A$ and $\R B$ are Morita equivalent in $[\om, \w{\P_{\omega}}]$, and so their colimits $R_\omega A$ and $R_\omega B$ are bisimilar (cf.\ the proof of Proposition~\ref{p:unnatural-Morita-eq}). Hence, $A$ and $B$ are also bisimilar.
\end{proof}

The definition of the presheaf of Morita equivalences $\M^{A,B}\colon \om^{\op}\to \Set$ in eq.~\eqref{eq:presheaf-Morita-equiv}, where $A$ and $B$ are Kripke models, can be generalised in a straightforward manner to all objects of $\w{\P_{\omega}}$. To this end, for all $X, Y\in \w{\P_{\omega}}$, let 
\[
\M^{X,Y}\colon \om^{\op}\to \Set, \ \ \M^{X,Y}(n)\coloneqq \M(\rho_{n,\omega} X, \rho_{n,\omega} Y).
\]
On morphisms, $\M^{X,Y}$ sends an arrow $m \to n$ to the function 
\[
\M(\rho_{n,\omega} X,\rho_{n,\omega} Y) \to \M(\rho_{m,\omega} X, \rho_{m,\omega} Y)
\] 
that sends a jointly monic span of trivial fibrations $\rho_{n,\omega} X \tfibleft Z \tfib \rho_{n,\omega} Y$ to its image
\[
\rho_{m,\omega} X \tfibleft \rho_{m,n} Z \tfib \rho_{m,\omega} Y
\]
under the functor $\rho_{m,n}\colon \w{\P_n} \to \w{\P_m}$.

\begin{definition}[Homotopical Hennessy--Milner property]
A class $\mathscr{X}$ of objects of $\w{\P_{\omega}}$ has the \emph{homotopical Hennessy--Milner property} if, for all ${X, Y\in \mathscr{X}}$, the presheaf $\M^{X,Y}$ is strongly non-empty just when it has a global section.
\end{definition}

\begin{remark}\label{rem:homotop-Morita-equiv}
Equivalently, a class $\mathscr{X}$ of objects of $\w{\P_{\omega}}$ has the homotopical Hennessy--Milner property if any two ${X, Y\in \mathscr{X}}$ are Morita equivalent exactly when $\rho_{n,\omega} X$ and $\rho_{n,\omega} Y$ are Morita equivalent for all $n\in\om$.
\end{remark}

\begin{lemma}
The following statements are equivalent for any class $\mathscr{H}$ of Kripke models: 
\begin{enumerate}
\item $\mathscr{H}$ has the Hennessy--Milner property.
\item The class $\{R_{\omega} A \mid A\in \mathscr{H}\}\subseteq \w{\P_{\omega}}$ has the homotopical Hennessy--Milner property.
\end{enumerate}
\end{lemma}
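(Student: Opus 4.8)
The plan is to deduce the statement from Theorem~\ref{th:HM-global-point-equiv}, by showing that for any Kripke models $A$ and $B$ the presheaf $\M^{A,B}\colon\om^{\op}\to\Set$ of eq.~\eqref{eq:presheaf-Morita-equiv} is isomorphic to the presheaf $\M^{R_{\omega}A,R_{\omega}B}$ obtained by specialising the generalised construction to $X=R_{\omega}A$ and $Y=R_{\omega}B$. Once this identification is available the equivalence is immediate: Theorem~\ref{th:HM-global-point-equiv} characterises the Hennessy--Milner property of $\mathscr{H}$ as the condition that, for all $A,B\in\mathscr{H}$, the presheaf $\M^{A,B}$ has a global section whenever it is strongly non-empty; and since every presheaf on $\om^{\op}$ admitting a global section is automatically strongly non-empty (the terminal object of $[\om^{\op},\Set]$ is the constant presheaf at a one-element set), the definition of the homotopical Hennessy--Milner property amounts to saying that, for all $A,B\in\mathscr{H}$, the presheaf $\M^{R_{\omega}A,R_{\omega}B}$ has a global section whenever it is strongly non-empty. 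Granting $\M^{A,B}\cong\M^{R_{\omega}A,R_{\omega}B}$, these two conditions are literally the same.

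The substance of the argument is therefore a natural isomorphism $\rho_{n,\omega}(R_{\omega}A)\cong R_{n}A$ in $\w{\P_{n}}$, valid for every $A\in\K$ and every finite ordinal $n$. To prove it I would recall from Remark~\ref{rem:hom-embeddings} that $R_{\omega}A$, viewed in $\w{\P_{\omega}}$, sends a path $P$ to the set of embeddings $P\rightarrowtail R_{\omega}A$, so that $\rho_{n,\omega}(R_{\omega}A)$ assigns that same set to each $P\in\P_{n}$. Now $R_{n}A$ is precisely the induced submodel of $R_{\omega}A$ on the sequences of length at most $n$, and any embedding out of a path $P\in\P_{n}$ carries the root of $P$ to the root $[a]$ of $R_{\omega}A$ and has image a chain through $[a]$ with at most $n$ elements, hence consisting of sequences of length at most $n$; it therefore factors uniquely through $R_{n}A$. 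Conversely, every embedding $P\rightarrowtail R_{n}A$ composes with the inclusion $R_{n}A\rightarrowtail R_{\omega}A$. This yields a bijection $\rho_{n,\omega}(R_{\omega}A)(P)\cong R_{n}A(P)$, natural in $P$, and hence the desired isomorphism of presheaves. Using that $\rho_{m,n}\circ\rho_{n,\omega}=\rho_{m,\omega}$ for $m<n$ (the inclusions $\P_{m}\hookrightarrow\P_{n}\hookrightarrow\P_{\omega}$ compose), together with the analogous $\rho_{m,n}(R_{n}A)\cong R_{m}A$, one then matches the transition maps of $\M^{R_{\omega}A,R_{\omega}B}$ (the functions induced by the functors $\rho_{m,n}$ acting on jointly monic spans of trivial fibrations) with those of $\M^{A,B}$, so that $\M^{A,B}\cong\M^{R_{\omega}A,R_{\omega}B}$ as objects of $[\om^{\op},\Set]$.

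The one step I expect to require genuine care is this last one: checking that the level-by-level isomorphisms $\rho_{n,\omega}(R_{\omega}A)\cong R_{n}A$ are compatible with the restriction maps $\rho_{m,n}$, so that they assemble into an honest isomorphism of $\Set$-valued presheaves on $\om^{\op}$ rather than a mere family of bijections on the values; this is routine but fiddly, and everything else follows formally from results already established. An alternative route would bypass Theorem~\ref{th:HM-global-point-equiv} altogether and argue directly through Remark~\ref{rem:homotop-Morita-equiv} together with Theorems~\ref{t:p-morph-emb-logic} and~\ref{t:morita-eq-omega}, reducing both properties to the implication ``$A\equiv^{\ML}B$ implies $A$ and $B$ are bisimilar'' ranging over $A,B\in\mathscr{H}$; but this essentially re-proves Theorem~\ref{th:HM-global-point-equiv}.
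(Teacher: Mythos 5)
Your proof is correct and takes the same route as the paper, whose entire proof is the one-line observation that the lemma is an immediate consequence of Theorem~\ref{th:HM-global-point-equiv}. The content you supply --- the natural isomorphism $\rho_{n,\omega}(R_{\omega}A)\cong R_{n}A$ and its compatibility with the transition maps, yielding $\M^{A,B}\cong\M^{R_{\omega}A,R_{\omega}B}$ --- is precisely the identification the paper leaves implicit in calling the consequence immediate, and your verification of it is sound.
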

\begin{proof}
This is an immediate consequence of Theorem~\ref{th:HM-global-point-equiv}.
\end{proof}

We conclude by observing that the homotopical Hennessy--Milner property holds for the class of fibrant objects.
\begin{proposition}
The class of fibrant objects of $\w{\P_{\omega}}$ has the homotopical Hennessy--Milner property.
\end{proposition}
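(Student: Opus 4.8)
By Remark~\ref{rem:homotop-Morita-equiv}, it suffices to show that two fibrant presheaves $X,Y\in\w{\P_{\omega}}$ are Morita equivalent whenever $\rho_{n,\omega}X$ and $\rho_{n,\omega}Y$ are Morita equivalent in $\w{\P_{n}}$ for every $n\in\om$; the reverse implication is clear, since the right Quillen functor $\rho_{n,\omega}$ of Proposition~\ref{p:Quillen-adj-mn} preserves trivial fibrations. The plan is to assemble the desired span $X\tfibleft W\tfib Y$ out of a \emph{compatible} family of spans of trivial fibrations $\rho_{n,\omega}X\tfibleft W_{n}\tfib\rho_{n,\omega}Y$ in the categories $\w{\P_{n}}$, compatibility meaning that $\rho_{n,n+1}$ carries the span at level $n+1$ to the one at level $n$. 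Since the full subcategories $\P_{1}\subseteq\P_{2}\subseteq\cdots$ exhaust $\P_{\omega}$, such a family glues to a span $X\leftarrow W\to Y$ in $\w{\P_{\omega}}$, and its two legs are trivial fibrations: by Proposition~\ref{p:model-structure}\ref{i:gen-cof} a morphism of $\w{\P_{\omega}}$ is a trivial fibration exactly when it has the right lifting property against $\Pi$, and since every member of $\Pi$ is a map between representables supported at a finite level, this holds precisely when all the restrictions along the functors $\rho_{n,\omega}$ are trivial fibrations. So it suffices to build the compatible family, and for that, beginning from any span at level $1$ (one exists by hypothesis), it is enough to extend a given span at level $n$ along $\rho_{n,n+1}$ to a span of trivial fibrations at level $n+1$.

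For the extension I would use the description of $\w{\P_{n+1}}$ coming from the fact that every path $Q$ of height $n+1$ is a maximal object of the forest $\P_{n+1}$ with a unique sub-path $Q^{-}$ of height $n$: a presheaf on $\P_{n+1}$ is the same datum as a presheaf $Z$ on $\P_{n}$ together with, for each such $Q$, a set equipped with a map to $Z(Q^{-})$. Writing $f\colon W_{n}\to\rho_{n,\omega}X$ and $g\colon W_{n}\to\rho_{n,\omega}Y$ for the legs of the given span, define $W^{+}\in\w{\P_{n+1}}$ by declaring its restriction to $\P_{n}$ to be $W_{n}$ and, for each path $Q$ of height $n+1$,
\[
W^{+}(Q)\ \coloneqq\ \bigl\{\,(x,w,y)\ \bigm|\ x\in X(Q),\ w\in W_{n}(Q^{-}),\ y\in Y(Q),\ x|_{Q^{-}}=f(w),\ y|_{Q^{-}}=g(w)\,\bigr\},
\]
with structure map $(x,w,y)\mapsto w$, where $|_{Q^{-}}$ denotes restriction along $Q^{-}\hookrightarrow Q$. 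Together with $f$ and $g$ at heights $\leq n$, the coordinate projections $(x,w,y)\mapsto x$ and $(x,w,y)\mapsto y$ assemble into morphisms $f^{+}\colon W^{+}\to\rho_{n+1,\omega}X$ and $g^{+}\colon W^{+}\to\rho_{n+1,\omega}Y$ whose restrictions along $\rho_{n,n+1}$ are $f$ and $g$; naturality of $f^{+}$ and $g^{+}$ is precisely the pair of equations defining $W^{+}(Q)$. It remains to check that $f^{+}$ and $g^{+}$ are trivial fibrations.

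This last verification is the heart of the matter. By Proposition~\ref{p:model-structure}\ref{i:gen-cof} one must lift $f^{+}$ against every embedding $P\hookrightarrow R$ between paths; if $\height(R)\leq n$ the lifting problem only involves heights $\leq n$ and is solved because $f$ is a trivial fibration in $\w{\P_{n}}$, so suppose $\height(R)=n+1$ and, the case $P=R$ being trivial, take $P$ to be a sub-path of $R^{-}$. Given $\alpha\in W^{+}(P)=W_{n}(P)$ and $\beta\in X(R)$ with $f(\alpha)=\beta|_{P}$, use that $f$ is a trivial fibration in $\w{\P_{n}}$ to find $w\in W_{n}(R^{-})$ with $w|_{P}=\alpha$ and $f(w)=\beta|_{R^{-}}$, and then look for $y\in Y(R)$ with $y|_{R^{-}}=g(w)$. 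This is where fibrancy is indispensable: $X(R)\neq\emptyset$ since $\beta\in X(R)$; because $\rho_{n+1,\omega}X$ and $\rho_{n+1,\omega}Y$ are Morita equivalent and every trivial fibration is pointwise surjective, also $Y(R)\neq\emptyset$; and fibrancy of $Y$, applied to the embedding $R^{-}\hookrightarrow R$ (Proposition~\ref{p:model-structure}\ref{i:fibrant}), then makes $Y(R)\to Y(R^{-})$ surjective, so such a $y$ exists and $(\beta,w,y)\in W^{+}(R)$ is the required lift. The argument for $g^{+}$ is symmetric, with the roles of $X$ and $Y$ exchanged. Since $W^{+}$ restricts along $\rho_{n,n+1}$ to $W_{n}$, iterating this construction from level $1$ produces the sought compatible family, which completes the argument.

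The step I expect to be the main obstacle is precisely this extension of a span from paths of height $n$ to paths of height $n+1$: for general, non-fibrant $X$ and $Y$ the Morita equivalences at the various finite levels need not cohere, and the tower of non-empty sets they form may have empty inverse limit, which is exactly the way the Hennessy--Milner property fails outside suitable classes of models. The use of fibrancy to propagate a span one height-level at a time is the homotopy-theoretic analogue of the classical step-by-step construction of a bisimulation between $\omega$-saturated Kripke models.
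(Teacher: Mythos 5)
Your proof is correct, and it takes a genuinely different route from the paper's. The paper reduces the construction of a span of trivial fibrations between $X$ and $Y$ to exhibiting a back-and-forth system (equivalently, a winning strategy for Duplicator in a back-and-forth game), and then invokes the arboreal-categories machinery of \cite{AR2023} (Theorems~6.4 and~6.12, together with the fact that $\w{\P_{\omega}}$ is an arboreal category) to convert that strategy into the desired Morita equivalence. You instead build the span explicitly, by induction on the height of paths, gluing a compatible tower of spans $\rho_{n,\omega}X \tfibleft W_n \tfib \rho_{n,\omega}Y$; your observations that trivial fibrations in $\w{\P_{\omega}}$ are detected levelwise (since the generating cofibrations $\Pi$ live at finite levels) and that the presheaf of compatible triples $W^{+}(Q)$ has the required lifting property are both correct. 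The essential mechanism is the same in both arguments: the level-$(n{+}1)$ Morita equivalence guarantees $Y(R)\neq\emptyset$ whenever $X(R)\neq\emptyset$ (because trivial fibrations are pointwise surjective), and fibrancy of $Y$ then upgrades mere non-emptiness of $Y(R)$ to surjectivity of $Y(R)\to Y(R^{-})$ via Proposition~\ref{p:model-structure}\ref{i:fibrant} --- exactly the step at which the Hennessy--Milner property fails for non-fibrant objects, as you note. What your approach buys is self-containedness, avoiding any appeal to the back-and-forth formalism, at the cost of some bookkeeping about presheaves on the forest $\P_{n+1}$; what the paper's approach buys is brevity and an explicit conceptual link between fibrancy and Duplicator strategies in model-comparison games.
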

\begin{proof}
Clearly, if a presheaf of the form $\M^{X,Y}$ has a global section then it is strongly non-empty.

Conversely, using the reformulation in Remark~\ref{rem:homotop-Morita-equiv}, we must show that any two fibrant objects $X,Y\in \w{\P_{\omega}}$ are Morita equivalent whenever $\rho_{n,\omega} X$ and $\rho_{n,\omega} Y$ are Morita equivalent for all $n\in\om$. We shall adopt a strategy similar to the one we used in the proof of Theorem~\ref{t:p-morph-emb-logic}, reducing the problem of constructing a span of trivial fibrations connecting~$X$ and~$Y$ to that of defining a back-and-forth system between them. 

This approach is based on \cite[Theorem~6.4]{AR2023}, and the reason why it applies in this context is that $\w{\P_{\omega}}$ is a category of presheaves over a forest order, hence an arboreal category \cite[Theorem~V.5]{RR}.
To make the connection with the lifting property characterising fibrant objects more explicit, instead of exhibiting a back-and-forth system between~$X$ and~$Y$, we shall equivalently prove that Duplicator has a winning strategy in a back-and-forth game played on $X$ and $Y$; see \cite[Theorem~6.12]{AR2023}.

The initial position of the game is given by the pair of unique morphisms 
\[
(\, !\colon \emptyp \to X, \ !\colon \emptyp \to Y \, ).
\]
In the first round, Spoiler chooses one of the objects, say $X$, and an arrow $m\colon P\to X$ where $P$ is a minimal element in the forest order $\P_{\omega}$. Duplicator must respond with an arrow $n\colon P\to Y$. Duplicator wins the first round if they are able to respond; in that case, the new position is $(m,n)$. In the following round, Spoiler chooses again one of the objects, say $Y$, and an arrow $n'\colon Q\to Y$ extending $n$ such that $Q$ covers $P$ in the order $\P_{\omega}$. Duplicator must respond with $m'\colon Q\to X$ extending $m$, and so forth. Duplicator wins the game if they have a strategy that allows them to play forever. 

It is not difficult to see that, if $X$ and $Y$ are fibrant, Duplicator has a winning strategy. At any stage of the game, the position is specified by a pair of morphisms 
\[
(\, m\colon P \to X, \ n\colon P \to Y \, ).
\]
Suppose that in the next round Spoiler chooses a morphism $m'\colon Q\to X$ extending $m$, with $Q\in \P_{n}$. We claim that there exists a morphism $Q\to Y$. By assumption, there is a Morita equivalence 
\[
\rho_{n,\omega} X \mathrel{\mathop{\tfibleft}^{f}} Z \mathrel{\mathop{\tfib}^{g}} \rho_{n,\omega} Y.
\]
The morphism $m'\colon Q\to X$ induces a morphism $\mu'\colon Q\to \rho_{n,\omega} X$, and using the fact that $f$ is a trivial fibration we get a morphism $Q\to Z$:
\[\begin{tikzcd}
\emptyp \arrow{d}[swap]{!} \arrow{r}{!} & Z \arrow{d}{f} \\
Q \arrow{r}{\mu'} \arrow[dashed]{ur} & \rho_{n,\omega} X
\end{tikzcd}\]
Composing the latter morphism with $g$, we obtain a morphism $Q\to \rho_{n,\omega} Y$, and thus also a morphism $Q\to Y$. Now, since $Y$ is fibrant, item~\ref{i:fibrant} in Proposition~\ref{p:model-structure} implies that there exists an arrow $n'\colon Q\to Y$ extending~$n$. Thus, Duplicator wins the round. Similarly, if Spoiler chooses a morphism into $Y$, then Duplicator can use the fact that $X$ is fibrant to win the round. This yields a Duplicator winning strategy for the game.
\end{proof}

\appendix
\section{A proof of Proposition~\ref{p:model-structure}}\label{app:model-str-proofs}
Recall that a model structure $(\We,\Cof,\Fib)$ on a Grothendieck topos $\E$ is a \emph{Cisinski model structure}~\cite{Cisinski2006} if it satisfies the following two properties:
\begin{enumerate}[label=(\roman*)]
\item it is cofibrantly generated, i.e.\ there exist small sets $I,J$ permitting the small object argument such that $\Cof = \llp{(\rlp{I})}$ and $\Cof \cap \We = \llp{(\rlp{J})}$.
\item $\Cof$ coincides with the class of monomorphisms.
\end{enumerate}
By Remark~\ref{rem:overdetermined}, a Cisinski model structure is completely determined by its class of fibrant objects. 

\begin{remark}
For a discussion of what it means to ``permit the small object argument'', see e.g.\ \cite[\S 12.2]{Riehl2014}. The conditions $\Cof = \llp{(\rlp{I})}$ and $\Cof \cap \We = \llp{(\rlp{J})}$ are typically expressed by saying that $I$ and $J$ \emph{generate}, respectively, the classes of cofibrations and of trivial cofibrations. 
\end{remark}

The model structure in Proposition~\ref{p:model-structure} can be obtained by means of a general procedure to construct Cisinski model structures on categories of presheaves on a small category; see~\cite{Cisinski2006} or \cite[\S 2.4]{Cisinski2019}.

Recall that $\Pi$ denotes the set of all morphisms in $\w{\P_k}$ whose domain and codomain are representable (note that $\Pi$ is a small set because $\P_k$ is a small category).

\begin{lemma}\label{l:Pi-generator}
The set $\Pi$ generates the class of monomorphisms of $\w{\P_k}$, i.e.\ $\llp{(\rlp{\Pi})}$ is the class of all monomorphisms.\footnote{In the terminology of \cite[Definition~2.4.4]{Cisinski2019}, $\Pi$ is a \emph{cellular model}.}
\end{lemma}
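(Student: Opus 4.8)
The plan is to verify that $\Pi$ is a \emph{cellular model} of $\w{\P_k}$, in the sense of \cite[Definition~2.4.4]{Cisinski2019}. Since $\w{\P_k}$ is a presheaf topos, hence locally presentable, and $\Pi$ is a small set, the small object argument identifies $\llp{(\rlp{\Pi})}$ with the smallest class of arrows that contains $\Pi$ and is closed under pushouts, transfinite composition and retracts; the statement therefore amounts to two inclusions. The inclusion $\llp{(\rlp{\Pi})}\subseteq\mathrm{Mono}$ is routine: every arrow of $\Pi$ is a monomorphism, because the Yoneda embedding reflects monomorphisms and $\P_k$ is a poset, and the monomorphisms of a Grothendieck topos form a class closed under pushout, transfinite composition and retracts, and hence contains the saturation $\llp{(\rlp{\Pi})}$ of $\Pi$.

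For $\mathrm{Mono}\subseteq\llp{(\rlp{\Pi})}$ I would run the standard cell-attachment argument. The ingredient specific to $\w{\P_k}$ is a description of the subobjects of a representable $P\in\P_k$: since $\P_k$ is a forest (Remark~\ref{rem:Pk-forest}), $\down P$ is a finite chain, so the subpresheaves of $P$ are precisely the initial presheaf $\emptyp$ and the representables $Q\leq P$; in particular every monomorphism of $\w{\P_k}$ with representable codomain is, up to isomorphism over its codomain, an arrow of $\Pi$. Given an arbitrary monomorphism $f\colon X\into Y$, well-order the pairs $(P,y)$ with $P\in\P_k$ and $y\in Y(P)\setminus X(P)$ so that paths of smaller height come earlier, and build an increasing transfinite chain of subobjects $X=X_0\subseteq X_1\subseteq\cdots\subseteq Y$: at a successor stage take the least pair $(P,y)$ with $y\notin X_\alpha(P)$, let $\chi_y\colon P\to Y$ be the corresponding arrow (a monomorphism, as $\P_k$ is a poset), and set $X_{\alpha+1}$ to be the pushout in $\w{\P_k}$ of the inclusion $\chi_y^{-1}(X_\alpha)\into P$ along $\chi_y^{-1}(X_\alpha)\to X_\alpha$; at a limit stage take the union. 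The height ordering forces $\chi_y^{-1}(X_\alpha)$ to contain $\{P'<P\}$, so this subpresheaf is $\emptyp$ or a path $Q\leq P$ and the attaching map lies in $\Pi$; since the union of two subobjects in a topos is their pushout over their intersection, each $X_{\alpha+1}\into Y$ is again a monomorphism; and the chain exhausts $Y$. Hence $f$ is a transfinite composite of pushouts of arrows of $\Pi$, so $f\in\llp{(\rlp{\Pi})}$.

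The main obstacle — really the only part that is not purely formal — is the bookkeeping in this transfinite construction: one must check that at each stage the pulled-back subobject $\chi_y^{-1}(X_\alpha)\into P$ is genuinely (isomorphic to) an arrow of $\Pi$, that attaching the corresponding cell adds the element $y$ without identifying anything, and that the chain consequently stabilises at $Y$. The auxiliary facts — stability of monomorphisms under pushout and under filtered colimits in a topos, the realisation of unions of subobjects as pushouts over intersections, and the identification of $\llp{(\rlp{\Pi})}$ with the class of retracts of transfinite composites of pushouts of coproducts of arrows of $\Pi$ — I would invoke as standard rather than reprove.
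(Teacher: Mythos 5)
Your argument is, in essence, a self-contained expansion of what the paper does by citation: the paper invokes the standard cellular model for presheaf toposes (the set of monomorphisms into quotients of representables, \cite[Example~2.1.11]{Cisinski2019}) and then identifies that set with $\Pi$ using that $\P_k$ is a poset of paths, whereas you rerun the underlying transfinite cell-attachment argument by hand. The easy inclusion, the closure properties of monomorphisms you invoke, the description of the sieves on a path, and the height-ordered skeletal induction are all sound, and this is a perfectly legitimate (if longer) route.

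There is, however, one genuine gap, and it sits exactly where you wave your hands: the empty subpresheaf. You correctly note that the subpresheaves of a representable $P$ are $\emptyp$ and the representables $Q\leq P$, but the ensuing claim that ``every monomorphism with representable codomain is, up to isomorphism, an arrow of $\Pi$'' is false for $\emptyp\into P$, since $\emptyp$ is not a path and $\Pi$ was defined as the set of embeddings \emph{between paths}. This case is not avoidable in your induction: the very first cell attached over a minimal (one-element) path $P$ carrying an element of $Y$ outside $X$ has attaching map $\emptyp\into P$. Nor can it be patched while keeping $\Pi$ as defined: since there are no maps from a non-empty representable to $\emptyp$, the arrow $\emptyp\to P$ lies in $\rlp{\Pi}$ vacuously, yet it does not lift against itself, so $\emptyp\to P\notin\llp{(\rlp{\Pi})}$ even though it is a monomorphism; a relative $\Pi$-cell complex with domain $\emptyp$ can never leave $\emptyp$. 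The statement therefore only holds after enlarging $\Pi$ by the arrows $\emptyp\to P$ with $P$ a one-element path (these generate $\emptyp\to P$ for all $P$ by composition with arrows of $\Pi$), and with that amendment your proof goes through verbatim. For what it is worth, the paper's own proof has the same blind spot: the assertion that ``a subpresheaf of a representable presheaf is also representable'' fails for the empty subpresheaf, so the identification $I=\Pi$ there should likewise read $I=\Pi\cup\{\emptyp\to P\}$.
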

\begin{proof}
The class of monomorphisms of $\w{\P_k}$ is generated by the set $I$ consisting of all monomorphisms $X\to P$ with $P$ a quotient of a representable presheaf; see e.g.\ \cite[Example~2.1.11]{Cisinski2019}. We claim that $I = \Pi$. 

The inclusion $\Pi\subseteq I$ follows from Remark~\ref{rem:Pk-forest}, combined with the fact that the Yoneda embedding preserves monomorphisms. For the converse inclusion observe that, in $\w{\P_k}$, representable presheaves have no non-trivial quotients, and a subpresheaf of a representable presheaf is also representable.
\end{proof}

Next, consider the presheaf $\two\in\w{\P_k}$, defined as $\two\coloneqq \one+\one$, which comes equipped with coproduct maps $0,1 \colon \one\to\two$. This induces an endofunctor
\[
- \times \two \colon \w{\P_k} \to \w{\P_k}.
\] 
Since $X \times \two\cong X + X$ for all $X\in\w{\P_k}$, the coproduct injections of $X$ correspond to arrows $\partial_0,\partial_1\colon X \to X\times \two$.\footnote{This data defines an \emph{exact cylinder} in the sense of \cite[Definition~2.4.8]{Cisinski2019}; cf.~Example~2.4.10 in \emph{op.\ cit.}} 

\begin{notation}
When $X = \one$, we shall write $\partial_e\colon \{e\}\to \two$, for $e\in\{0,1\}$, to denote the coproduct injections.
\end{notation}

Let $f,g\colon X \to Y$ be arrows in $\w{\P_k}$. A \emph{homotopy} from $f$ to $g$, denoted by $f \Rightarrow g$, is a morphism $\eta\colon X\times \two \to Y$ making the following diagram commute.
\[\begin{tikzcd}
X \arrow[bend right = 30]{dr}[description]{f} \arrow{r}{\partial_0} & X \times \two \arrow{d}[description]{\eta} & X \arrow{l}[swap]{\partial_1} \arrow[bend left = 30]{dl}[description]{g} \\
 & Y & 
\end{tikzcd}\]
Since $X\times\two \cong X + X$, the universal property of the coproduct entails that between any two parallel arrows in $\w{\P_k}$ there is always a (unique) homotopy. In particular, the homotopy relation $\Rightarrow$ on hom-sets $\w{\P_k}(X,Y)$ is an equivalence relation. Denote by 
\[
[X,Y]
\]
the quotient of the set $\w{\P_k}(X,Y)$ with respect to this equivalence relation.

\begin{remark}\label{rem:cHo-preorder}
The \emph{classical homotopy category} of $\w{\P_k}$ is the category $\cHo(\w{\P_k})$ whose objects are those of $\w{\P_k}$, and for all $X,Y \in\cHo(\w{\P_k})$, 
\[
\cHo(\w{\P_k})(X,Y)\coloneqq [X,Y].
\] 
Note that $\cHo(\w{\P_k})$ is a (large) preorder, as a hom-set $[X,Y]$ is either empty or a one-element set (because any two parallel arrows in $\w{\P_k}$ are homotopic). Thus, the quotient functor $\w{\P_k} \to \cHo(\w{\P_k})$ is the poset reflection of $\w{\P_k}$.
\end{remark}

Let $\Gamma$ be the set of monomorphisms
\begin{equation}\label{eq:gamma-arrows}
(P \times \two) \cup (Q \times \{e\}) \to Q \times \two
\end{equation}	
induced by the universal property of the pushout, for $e\in \{0,1\}$ and ${i\colon P\to Q}$ in $\Pi$:
\[\begin{tikzcd}[column sep = 2.5em]
 P \times \{e\} \arrow{r}{\id_P\times \partial_e} \arrow{d}[swap]{i\times \id_{\{e\}}} & P \times \two \arrow{d} \arrow[bend left=30]{ddr}[description]{i\times\id_{\two}} & {} \\
 Q \times \{e\} \arrow{r} \arrow[bend right=30]{drr}[description]{\id_{Q}\times\partial_e} & (P \times \two) \cup (Q \times \{e\}) \arrow[dashed]{dr} \arrow[ul, phantom, "\ulcorner", very near start] & {} \\
 {} & {} & Q \times \two
\end{tikzcd}\]
We refer to $\rlp{\Gamma}$ as the class of \emph{naive fibrations}.

\begin{remark}
In view of \cite[Remarque~1.3.15]{Cisinski2006}, the set $\Gamma$ generates the smallest class $\An$ of \emph{anodyne extensions} containing $\emptyset$, and so 
\[
\rlp{\Gamma} = \rlp{(\llp{(\rlp{\Gamma})})} = \rlp{\An}
\]
coincides with the class of naive fibrations in the sense of \cite[Definition~2.4.18]{Cisinski2019}.
\end{remark}
It follows from \cite[Theorem~2.4.19]{Cisinski2019} that the presheaf category $\w{\P_k}$ admits a (Cisinski) model structure in which:
\begin{enumerate}[label=(\roman*)]
\item The cofibrations are precisely the monomorphisms (and they coincide with the class $\llp(\rlp{\Pi})$ by Lemma~\ref{l:Pi-generator}).
\item A presheaf~$X\in \w{\P_k}$ is fibrant just when the unique arrow $!\colon X \to \one$ is a naive fibration.
\end{enumerate}
Note that fibrant objects in Proposition~\ref{p:model-structure} are defined as those presheaves~$X$ such that the unique morphism $!\colon X\to \one$ belongs to $\rlp{\Lambda}$, where $\Lambda$ is the subclass of $\Gamma$ defined by setting $e = 1$. As $\rlp{\Lambda} = \rlp{\Gamma}$, the two notions of fibrant object are one and the same. Hence, Proposition~\ref{p:model-structure} follows.

%

\bibliographystyle{amsplain-nodash}
\providecommand{\bysame}{\leavevmode\hbox to3em{\hrulefill}\thinspace}
\providecommand{\MR}{\relax\ifhmode\unskip\space\fi MR }
\providecommand{\MRhref}[2]{%
  \href{http://www.ams.org/mathscinet-getitem?mr=#1}{#2}
}
\providecommand{\href}[2]{#2}

\end{document}